\definecolor{pinegreen}{rgb}{0.0, 0.47, 0.44}
\definecolor{brilliantrose}{rgb}{1.0, 0.33, 0.64}
\numberwithin{equation}{section}   
\newtheorem{theorem}{Theorem}[section]
\newtheorem{lemma}[theorem]{Lemma}
\newtheorem{proposition}[theorem]{Proposition}
\theoremstyle{definition}
\newtheorem{remark}[theorem]{Remark}
\newcommand*\bigcdot{\mathpalette\bigcdot@{.5}}
\newcommand*\bigcdot@[2]{\mathbin{\vcenter{\hbox{\scalebox{#2}{$\m@th#1\bullet$}}}}}
\newcommand{\R}{\mathbb{R}}
\newcommand{\N}{\mathbb{N}}
\newcommand{\Z}{\mathbb{Z}}
\newcommand{\ellipses}{E}
\def\misgausskd{d \gamma_\infty}
\def\misgaussk{\gamma_\infty}
\newdimen\deltay
\def\Ddot#1#2(#3,#4,#5,#6){\deltay=#6\setbox1=\hbox to0pt{\smash{\dotcnt=1
\kern#3\loop\raise\dotcnt\deltay\hbox to0pt{\hss#2}\kern#5\ifnum\dotcnt<#1
\advance\dotcnt 1\repeat}\hss}\setbox2=\vtop{\box1}\ht2=#4\box2}
\def\succvarepsilon{{\underline{\varepsilon}}}
\def\Blue{\color{blue}}
\pgfplotsset{compat = newest}
\def\author@andify{%
  \nxandlist {\unskip ,\penalty-1 \space\ignorespaces}%
    {\unskip {} \@@and~}%
    {\unskip \penalty-2 \space \@@and~}%
}
\title[Variation for Ornstein--Uhlenbeck: the higher--dimensional case]{
Variational inequalities \\ for the Ornstein--Uhlenbeck semigroup: \\the higher--dimensional case}
\author{Valentina Casarino}
\address{DTG, Universit\`a degli Studi di Padova\\ Stradella san Nicola 3 \\I-36100 Vicenza \\ Italy}
\email{valentina.casarino@unipd.it}
\author{Paolo Ciatti}
\address{Dipartimento di Matematica "Tullio Levi Civita", Universit\`a degli Studi di Padova\\Via Trieste, 63, 35131 Padova,  \\ Italy}
\email{paolo.ciatti@unipd.it}
\author{Peter Sj\"ogren}
\address{Mathematical Sciences,  University of Gothenburg and  Mathematical Sciences,
Chalmers University of Technology  \\ SE - 412 96 G\"oteborg, Sweden}
\email{peters@chalmers.se}
\keywords{Variation seminorm,
Ornstein--Uhlenbeck semigroup,
Mehler kernel, vector-valued Calder\'on--Zygmund  theory}
\subjclass[2000]{
47D03, 
42B20, 
42B35, 
42B99
}
\thanks{The first and second authors are members of the Gruppo Nazionale per l'Analisi Matematica, la Probabilità e le loro Applicazioni (GNAMPA)
of the Istituto Nazionale di Alta Matematica (INdAM) and were partially supported by Progetto GNAMPA 2024: ``$L^p$ estimates for
singular integrals in nondoubling settings". The third author would like to thank the University of Padova  for its generosity during his visits.}
\date{\today}
\begin{document}

\maketitle

\begin{abstract}
We study the $\varrho$-th order variation seminorm of a general Ornstein--Uhlenbeck semigroup $\left(\mathcal H_t\right)_{t>0}$ in $\R^n$,  taken with respect to $t$.
We prove that this seminorm defines an operator of weak type $(1,1)$ with respect to the invariant  measure when $\varrho> 2$. For large $t$,
one has 
 an enhanced  version  of the standard weak-type $(1,1)$ bound.
  For small $t$,  the proof hinges on vector-valued Calder\'on--Zygmund techniques in the local region, and
 on  the fact that
  the $t$ derivative of the integral kernel of  $\mathcal H_t$ in the global region has a bounded number of zeros in $(0,1]$.
 A counterexample is given  for $\varrho= 2$;
  in fact, we
 prove that the second  order variation seminorm of  $\left(\mathcal H_t\right)_{t>0}$, 
 and therefore also the $\varrho$-th order variation seminorm for any $\varrho\in [1,2)$,
 is not of strong nor weak type $(p,p)$ for any $p \in [1,\infty)$ with respect to the invariant measure.

  \end{abstract}

\section{Introduction}
In this paper we prove
 the   weak type $(1,1)$  for the variation  operator of a general Ornstein--Uhlenbeck semigroup $\big( \mathcal H_t\big)_{t> 0}$  in $\R^n$ for any $n\ge 1$.

Recently the authors proved
 this result       in dimension one   \cite{CCS6}, answering a question asked by Almeida et al.\ in \cite[p.\,31]{Almeida}.
In this article we provide an answer to the question in  \cite{Almeida}
covering the higher--dimensional case.
The methods we use are different from those of \cite{CCS6}, which seem hard to adapt to the case $n>1$.

 The proof relies on extensive application of properties of the Ornstein--Uhlenbeck semigroup and its integral kernel $K_t(x,u)$ (known as the Mehler kernel),
recently proved by the authors in a series of papers concerning various  issues arising from harmonic analysis 
(see \cite{CCS1, CCS2, CCS3, CCS4, CCS5, CCS6} and also the brief introductory summary \cite{CCS7}). We also apply vector--valued Calder\'on--Zygmund theory.
As far as we know, the first application of this theory in this context was in \cite{Harboure2, Harboure} (see also \cite{Rubio}, 
where  \cite{Harboure2, Harboure}   have their roots, and \cite{Crescimbeni}).

Let $\big(T_t\big)_{t> 0}$
be a family of bounded operators between spaces of functions defined on a measure space $(X,\mu)$. 
The $\varrho$-th order variation operator of $\big(T_t\big)_{t> 0}$  on an interval $I\subset \R_+$ is defined,
 for $1 \le \varrho < \infty$ and suitable $f$,  by
           \begin{equation}\label{def:intro_rho2}
  \|T_t\,f(x)\|_{v(\varrho), I} = \sup\, \left( \sum_{i=1}^N |T_{t_i} f(x)- T_{t_{i-1}}f(x)|^\varrho \right)^{1/\varrho},\qquad x\in X,
\end{equation}
where the supremum is taken over all finite, increasing
 sequences $\left(t_i \right)_0^N$ of points in $I$.

In the last fifty years,
variational inequalities, stating that  the $L^p (\mu)$ norm of \eqref{def:intro_rho2}
 is  bounded by a constant times the $L^p(\mu)$ norm of $f$,
 have been 
 widely investigated. The first bounds, due to  D. Lépingle \cite{Lepingle} for  bounded martingales and to
 V. F.~Gaposhkin \cite{Gaposhkin1, Gaposhkin2} and J. Bourgain  \cite{Bourgain} for ergodic averages,
 have been generalized in various directions.
 We refer to \cite{CCS6} for a brief description of  recent results 
 with a focus on harmonic analysis; see \cite{Seeger, Campbell, Jones, Jones1, Jones1-higher,  Jones2, Jones4, Harboure, Betancor1, Betancor2, Betancor3, {AlmeidaMedit}, Ma1, Ma2}. 
 More recently,
  the study of oscillatory and jump inequalities,  
 in addition
to variational inequalities,
began to develop from both an analytic and a number--theoretic perspective; we refer in particular to
 \cite{Bourgain_etal, Mirek1, Mirek2, Mirek3, Mirek4, Mirek5, Mirek6}.
For an overview of  the connections
with ergodic theory, analytic number theory, and harmonic analysis, see in particular  \cite{Krause}.

In this paper
the  family $\big(T_t\big)_{t> 0}$  in \eqref{def:intro_rho2} will  always be  the Ornstein-Uhlenbeck semigroup $\big( \mathcal H_t\big)_{t> 0}$ in $\R^n$.
This is the semigroup  generated by the elliptic operator
\begin{equation}\label{def:opL}
\mathcal L=
\frac12
\mathrm{tr}
\big( Q\nabla^2 \big)+\langle Bx, \nabla \rangle,
\end{equation}
called the Ornstein-Uhlenbeck operator.
Here $\nabla$ is the gradient and $\nabla^2$ the Hessian matrix. Moreover,
 $Q$ is  a real, symmetric and positive definite $n\times n$
 matrix, called the covariance of $\mathcal L$, and
$B $ is a  real $n\times n$   matrix whose  eigenvalues have negative real parts;
$B $  indicates the drift of  $\mathcal L$.
In Section \ref{preliminaries}
we will provide  explicit expressions for $\mathcal H_t$, seen as an integral operator with  a kernel $K_t(x,u)$.
It is well known  that there exists an  invariant measure under the action of $\mathcal H_t$, unique up to a positive factor. This measure, denoted  by  $\gamma_\infty$, is basic in  Ornstein-Uhlenbeck theory; it is described explicitly in Section \ref{preliminaries}.

In 2001 Jones and Reinhold \cite{Jones1}  proved that for    $\varrho > 2$ the variation operator
of any symmetric
 diffusion semigroup is  $L^p$ bounded for   $1<p<\infty$. Ten years later Le Merdy and Xu   \cite{Le Merdy}
 extended this result to a nonsymmetric context. In fact, Corollary 4.5 in   \cite{Le Merdy}
 applies to
 $\mathcal H_t$ (see \cite[p. 31]{Almeida} for a discussion).  It says that the operator given by
 $f \mapsto  \| \mathcal H_t f(x)\|_{v(\varrho),\Bbb R_+}$,
 is bounded on $L^p (\gamma_\infty)$ for $1<p<\infty$ and   $\varrho > 2$.
 Our result gives the corresponding weak type  $(1,1)$, as follows.

\begin{theorem}\label{thm}
 For each  $\varrho > 2$ the operator that maps   $f \in L^1(\gamma_\infty)$ to the function
  \begin{equation*}                    
  \| \mathcal H_t f(x)\|_{v(\varrho),\Bbb R_+}, \quad x \in \mathbb R^n,
\end{equation*}
where the $v(\varrho)$ seminorm is taken in the variable $t$, is of weak type $(1,1)$ with respect to the measure $\gamma_\infty$.
\end{theorem}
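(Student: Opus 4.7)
My plan is to split the $v(\varrho)$-variation over $\R_+$ into the two subintervals $(0,1]$ and $[1,\infty)$. Using the standard subadditivity of the $\varrho$-variation seminorm when an interval is cut at a single point, one has
\[
\|\mathcal H_t f(x)\|_{v(\varrho),\R_+} \le \|\mathcal H_t f(x)\|_{v(\varrho),(0,1]} + \|\mathcal H_t f(x)\|_{v(\varrho),[1,\infty)},
\]
so the theorem reduces to weak-type $(1,1)$ for each of the two terms on the right, and these two pieces call for very different techniques.

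For the variation on $[1,\infty)$, the plan is to exploit the smoothness and exponential decay of the Mehler kernel for $t\ge 1$, together with the convergence $\mathcal H_t f \to \int f\,d\gamma_\infty$, to obtain the naive bound
\[
\|\mathcal H_t f(x)\|_{v(\varrho),[1,\infty)} \le \int_1^\infty |\partial_t \mathcal H_t f(x)|\, dt.
\]
With the pointwise estimates on $\partial_t K_t$ developed in \cite{CCS1,CCS2,CCS3}, the resulting linear operator on $f$ should satisfy a bound strictly stronger than weak $(1,1)$, which matches the ``enhanced'' large-$t$ statement announced in the abstract.

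For the variation on $(0,1]$, I would perform the usual local/global decomposition in the spatial variables. In the \emph{local region}, where the Ornstein-Uhlenbeck setting is comparable to the Euclidean one, I would recast $f \mapsto (t \mapsto \mathcal H_t f)$ as a singular integral with values in the Banach space $\mathcal V_\varrho$ of functions of finite $v(\varrho)$-variation on $(0,1]$, and apply the vector-valued Calder\'on-Zygmund theory of \cite{Harboure2,Harboure,Rubio}. This requires a size estimate and a H\"ormander-type smoothness estimate for the $\mathcal V_\varrho$-valued kernel $(x,u)\mapsto (t \mapsto K_t(x,u))$; both should follow from pointwise bounds on $K_t$ and its spatial derivatives available in the authors' earlier papers, together with a Lépingle/Le Merdy-Xu style argument to control the variation norm of the $t$-family itself. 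In the \emph{global region}, I would invoke the announced property that, for $(x,u)$ outside the local region, $\partial_t K_t(x,u)$ has a uniformly bounded number of zeros in $(0,1]$. This forces $t \mapsto K_t(x,u)$ to be piecewise monotone with a bounded number of pieces, so its $\varrho$-variation in $t$ is controlled by a bounded number of values of $K_t(x,u)$. Consequently the global $v(\varrho)$-operator is dominated by the corresponding global maximal operator, for which the weak $(1,1)$ bound has already been established.

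The main obstacle I expect is the vector-valued Calder\'on-Zygmund step in the local region: the target space $\mathcal V_\varrho$ is far from being Hilbertian, the Jones-Reinhold-type arguments underlying the strong-type bounds are essentially $L^2$-based and do not deliver the $L^1$ endpoint, and one must verify the H\"ormander condition in the $\mathcal V_\varrho$ norm while respecting both the nondoubling nature of $\gamma_\infty$ and the fine local geometry introduced in \cite{CCS1,CCS4,CCS6}. The global-region zero-count argument, although technically delicate, reduces essentially to pointwise kernel analysis of the type already carried out in the authors' previous work on the Mehler kernel.
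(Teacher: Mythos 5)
Your plan coincides essentially step for step with the paper's proof: split the variation at $t=1$; on $[1,\infty)$ integrate pointwise bounds on $\dot K_t$ to get the enhanced weak-type estimate; on $(0,1]$ perform a local/global decomposition, handle the global part via the bounded number of zeros of $t\mapsto\dot K_t(x,u)$ to dominate by the global maximal operator, and handle the local part by vector-valued Calder\'on--Zygmund theory with the $L^p$ input coming from Le Merdy--Xu (which is precisely where $\varrho>2$ enters). The one point you flag as an obstacle but do not resolve — working with the nondoubling measure $\gamma_\infty$ in the CZ step — is addressed in the paper by conjugating with $e^{\pm R(x)}$ and passing to Lebesgue measure on each ring $R_j$, so the CZ argument is run with respect to $dx$ rather than $d\gamma_\infty$.
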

In other words, the inequality
\begin{equation} \label{main-ineq}
\gamma_\infty
\{x\in\R^n :   \| \mathcal H_t f(x)\|_{v(\varrho),\Bbb R_+}
     > \alpha\} \le \frac{C}\alpha\,\|f\|_{L^1( \gamma_\infty)}, \qquad \alpha>0,
\end{equation}
 holds for some $C > 0$ and all functions $f\in L^1 (\gamma_\infty)$.

The  pointwise estimates of the Mehler kernel and  its derivative needed  to  prove  Theorem \ref{thm}
 are quite different  for small and large $t$. Thus we are  led to
 distinguish between the variation in the interval $0 < t \le 1$ and that in  $1 \le t < \infty$.
 
 The variation in $[1,\infty)$ is treated following a geometric approach
 recently developed by the authors  in \cite{CCS1, CCS2, CCS3, CCS5}, which relies on a  system of polar-like coordinates. Proposition \ref{t>1} is actually an enhanced version of \eqref{main-ineq}  with $\Bbb R_+$ replaced by
 $[1,\infty)$.

 The study of the variation  in $(0,1]$  is more delicate and
requires a further distinction between local and global parts of the Ornstein-Uhlenbeck semigroup operator.
In the Ornstein--Uhlenbeck setting, 
the local part is usually defined by the relation $|x-u| \lesssim 1/(1+|x|)$ between the two variables of the Mehler kernel.
This splitting was first introduced by
 Muckenhoupt  \cite{Mu} in one dimension, and by the third author \cite{Sj} in higher dimension. It has since been widely used in the literature, in particular in \cite{Perez}. In this paper, P\'erez  shows that the local parts of many   operators related to the Ornstein--Uhlenbeck semigroup behave precisely like the corresponding classical operators in Euclidean space.

 The reason for this definition of the local part is that it makes the density of the measure   $\gamma_\infty$  have the same order of magnitude at the two points $x$ and $u$. But this remains true if $x$ and $u$ belong to a suitable elliptic ring of width roughly $1/(1+|x|)$.  In this paper, we will  define the local part by splitting $\R^n$
 into  rings of this type.   This makes the arguments  more explicit; for details see Section \ref{local1}.  The expression ``local part" is not quite adequate here, since
 $x$ and $u$ may be far apart, but we prefer to keep it, since this part plays the same role in the proofs as it does in earlier work.

To prove  \eqref{main-ineq} for  the global part and  $0 < t \le 1$, we  estimate its kernel using a method from
 \cite{CCS5}. It is
based on the observation that   the
 number of zeros in $(0,1]$ of the function $t \mapsto  \partial_t K_t(x,u)$
 is bounded, uniformly in  $(x,u)\in \mathbb R^n\times\mathbb R^n$.

The argument for
 the local part and  $0 < t \le 1$ is based on vector--valued Calder\'on--Zygmund techniques for singular integrals
from \cite{Harboure2, Harboure, Crescimbeni}. This requires a transition to Lebesgue measure.

The parameter $\varrho$ deserves more attention.
Some of our results hold   for $\varrho\ge 1$ as well. This is the case of the  variational bounds  in $[1,\infty)$
and also in  $(0,1]$ for
the global part;  we refer to Theorem \ref{t>1} and Theorem \ref{t<1,global}, respectively.
What forces the restriction $\varrho>2$ in Theorem
\ref{thm} is the variational bound
for
the local part of $\mathcal H_t$  when $t\in (0,1]$;
see Theorem \ref{locsmallt_v2}, where we apply  results from \cite{Le Merdy}  holding only for $\varrho> 2$.
In the last  section, we provide a counterexample showing that  the condition $\varrho>2$ is necessary in Theorem \ref{thm}.

\medskip

 \subsection{Structure of the paper}
 We  gather some known facts about the Ornstein--Uhlenbeck semigroup and its integral kernel $K_t$ in Section~\ref{preliminaries};  in particular,
 we give pointwise bounds  for $K_t$ and its time derivative {$\dot K_t = \partial_t K_t$}.
 In Section~\ref{variation} some  basic properties of the variation seminorm are discussed; here
 we also introduce a few reductions which will simplify the proof of Theorem~\ref{thm}.
 Section \ref{t large}
 is devoted to the proof of the weak type inequality \eqref{main-ineq} with $\R_+$ replaced by $[1,+\infty)$.
The proof of  \eqref{main-ineq}
{\ for $(0,1]$} is carried out in Sections~\ref{local1}, \ref{The variation of the global part} and \ref{tsmall}. More precisely, in Section \ref{local1} we describe our localization procedure and the local and global parts, and  Section~\ref{The variation of the global part} deals with  the global part. The local part is treated in Section
 \ref{tsmall}.  The discussione of a technical issue, arising in the local context and involving  Bochner integral, is postponed to the Appendix (Section \ref{Appendix}). 
  In Section \ref{counterexample} we conclude by showing
  that
 Theorem \ref{thm} does not hold for $\varrho=2$.

     \subsection{Notation}            We will  use the symbols $0<c,\,C<\infty$ to denote constants, not necessarily equal at different occurrences. These constants will depend
                  only on $n$, $Q$  and $B$, unless otherwise explicitly stated.
                  If $a$ and $b$ are positive quantities, $a \lesssim b$ or
equivalently  $b \gtrsim a$ means  $a \le C b$. If both $a \lesssim b$ and  $b \lesssim a$ hold,
we shall write  $a \simeq b$.
By $\N$ we denote the set of all nonnegative integers.

We write   \[\dot K_t = \partial_t K_t,\] that is, we adopt the  dot notation for differentiation with respect to the  variable $t$.

 \section{The Mehler kernel}\label{preliminaries}

{{The Ornstein--Uhlenbeck semigroup may be formally written as
$
\mathcal H_t=e^{t\mathcal L}$,
${t> 0}$, with $\mathcal L$ given by \eqref{def:opL}. In order to give more explicit expressions for this semigroup,
we introduce the
positive definite, symmetric matrices
\begin{equation}                                            \label{defQt}
Q_t=\int_0^t e^{sB}Qe^{sB^*}ds, \qquad \text{ $0<t\leq +\infty$},
\end{equation}
and the
normalized Gaussian measures  $\gamma_t $ in $\R^n$, with $t\in (0,+\infty]$,  whose densities
with respect to Lebesgue measure $dx$ are given by
\[
x\mapsto (2\pi)^{-\frac{n}{2}}
(\text{det} \, Q_t)^{-\frac{1}{2} }
\exp\left({-\frac12 \langle Q_t^{-1}x,x\rangle}\right).
\]
 Here the case $t=+\infty$ gives the invariant measure  $\gamma_\infty$ that appears
 already in the introduction.


Kolmogorov's formula states that
for all bounded and continuous functions in $\R^n$ one has
\begin{equation}\label{Kolmo}
\mathcal H_t
f(x)=
\int
f(e^{tB}x-y)\,d\gamma_t (y)\,, \qquad x\in\R^n,\;\; t>0.
\end{equation}
Starting from \eqref{Kolmo}, one may write $\mathcal H_t $ as an integral operator
with a (Mehler) kernel $K_t(x,u)$.
  We point out that the term kernel  in this paper  refers to integration with respect to our basic measure $ \gamma_\infty$, with an exception in Section  \ref{tsmall}.
In fact, we saw in  \cite{CCS2} that  for each $f\in L^1(\gamma_\infty)$ and all $t>0$ one has
\begin{align}                       \label{def-int-ker}
 \mathcal H_t
f(x) &=
 \int
K_t
(x,u)\,
f(u)\,
 d\gamma_\infty(u)
  \,, \qquad x\in\R^n,
\end{align}
 where for $x,u\in\R^n$ and $t>0$ the Mehler kernel $K_t$   is given by
\begin{align}    \label{mehler}                   
K_t (x,u)\!
=\!
\Big(
\frac{\det \, Q_\infty}{\det \, Q_t}
\Big)^{{1}/{2} }
e^{R(x)}\,
\exp \Big[
{-\frac12
\left\langle \left(
Q_t^{-1}-Q_\infty^{-1}\right) (u-D_t \,x) \,, u-D_t\, x\right\rangle}\Big]\!.\;\;\;\;
\end{align}
Here
 $R(x)$ is
the quadratic form
\begin{equation*}
R(x) ={\frac12 \left\langle Q_\infty^{-1}x ,x  \right\rangle}, \qquad\text{$x\in\R^n$},
\end{equation*}
and
\begin{equation}\label{def:Dt}
 D_t =
 Q_\infty \, e^{-tB^*} Q_\infty^{-1} ,
\end{equation}
 which  is a one-parameter group, defined for all $t \in\R$.

It will be convenient to introduce another norm on $\R^n$ by
\[
|x|_Q := | Q_\infty^{-1/2}\,x|,  \qquad x \in \R^n.
\]
Then
  $|x|_Q \simeq |x|$,
and  $R(x) = |x|_Q^2/2 , \;\,\text{$x\in\R^n$}$.
Observe that the density of     $\gamma_\infty$ is proportional to   $ e^{-R(x)}$.

\subsection{Pointwise estimates for the Mehler kernel}

We shall repeatedly use the following pointwise estimate of the Mehler kernel. For $0<t\leq 1$ and
 all $(x,u) \in \mathbb R^n\times \mathbb R^n$ one has
\begin{equation}\label{litet}
   K_t(x,u)
\,\lesssim \, \frac{ e^{R( x)}}{t^{n/2}} \exp\left(-c\,\frac{|u-D_t\, x |^2}t\right).
\end{equation}
This is proved in \cite[(3.4)]{CCS2}.

We shall  need estimates for the time derivative of the Mehler kernel as well.
For an explicit expression of $\dot K_t$, the reader is referred to \cite[Lemma 4.2]{CCS5}; we only recall here the following pointwise  bounds for $\dot K_t$
(see  \cite[(5.5) and (5.4)]{CCS5}):
\begin{equation}\label{dotKeps}
|\dot K_t(x,u)|\lesssim
\, e^{R(x)}\,t^{-n/2}\,
\exp{\left(-c\,\frac{|u-  D_{t}\,x|^2}t \right)}\,
\left(t^{-1}+|x| t^{-1/2}\right),
 \end{equation}
for $0<t \le 1$, and
\begin{equation}\label{dotK1} |\dot K_t (x,u)| \lesssim \, e^{R(x)}\,\exp{\left(-c\,|  D_{-t}\,u- x|^2\right)}\,\left(|D_{-t}\,u| + e^{-ct}\right) \end{equation}for   $t\geq1$.
Both formulae hold
for all $(x,u) \in \mathbb R^n\times \mathbb R^n$.

\section{The variation seminorm}\label{variation}
To state some basic properties of the   variation seminorm, we define it for a generic continuous
 real-valued function $\phi$  defined in an interval $I$, by setting for any  $1 \le \varrho < \infty$
\begin{equation}                            \label{def_var_gen}
  \|\phi\|_{v(\varrho), I} = \sup \left( \sum_{i=1}^{N} |\phi(t_i) - \phi(t_{i-1})|^\varrho \right)^{1/\varrho}.
\end{equation}
As in \eqref{def:intro_rho2},  the supremum is taken over all finite, increasing sequences $\left(t_i \right)_0^N$ of points in $I$.
         This  seminorm  vanishes only for constant functions.

Observe  that the seminorm
$\|.\|_{v(\varrho), I}$ is  decreasing in  $\varrho$ for $1 \le \varrho < \infty$.
It is  also subadditive in $I$, in the following sense. Take an inner point $\tau$ of $I$ and set $I_+  = I \cap [\tau, +\infty)$ and $I_-  = I \cap (-\infty, \tau]$. Then for  $1 \le \varrho < \infty$ and any $\phi$
\begin{equation*}
   \|\phi \|_{v(\varrho), I} \le \|\phi \|_{v(\varrho), I_+} + \|\phi  \|_{v(\varrho), I_-}.
 \end{equation*}

  If  $\phi \in C^1(I)$ and $\phi' \in L^1(I)$, then  for    $1 \le \varrho < \infty$
 \begin{equation}\label{var}
   \|\phi  \|_{v(\varrho), I} \le \int_{I} |\phi'(t)|\,dt,
 \end{equation}
see \cite[Lemma 2.1]{CCS6}.

In the last section, we will  consider a discrete version of the variation, obtained by replacing $I$ in the   definition                       \eqref{def_var_gen}
by a set $\mathcal I$ which is the intersection of $\N_+$ and an interval, and where $\phi$ is defined.
   For $\varrho = 2$ one has the simple estimate
\begin{equation}\label{discrete_trivial}
  \|\phi\|_{v(2), \mathcal I} \lesssim \left( \sum_{\ell\in \mathcal I} \phi(\ell)^2 \right)^{1/2}.
\end{equation}

For future convenience, we note that
a combination of
 \eqref{var}
 and \cite[Lemma 5.3]{CCS5} yields that for any $\varrho \in [1,\infty)$, any interval $I \subset \R_+$ and all $x\in\R^n$
 \begin{align}  \label{important}
 \|\mathcal H_tf(x)\|_{v(\varrho), I}&\le       
    \int_I \left|\frac{\partial}{\partial t} \int K_t(x,u) f(u)\,d\gamma_\infty (u)  \right|\,dt\notag
 =\int_I \left| \int \dot K_t(x,u) f(u) \,d\gamma_\infty (u) \right|\,dt     \notag  \\
&\le \int         \int_I \big|  \dot K_t(x,u)\big| \,dt \, |f(u)|\,  d\gamma_\infty (u),\qquad f\in L^1(\gamma_\infty).
  \end{align}

     \subsection{Simplifications}\label{Simplificationsss}
By means of a few
observations, it is possible to simplify the proof of Theorem~\ref{thm}.

First of all, when proving the inequality \eqref{main-ineq} one may take
$f $ such that
 $\|f\|_{L^1( \misgaussk)}=1$. As a consequence,  $\alpha$ in the same estimate may be assumed large, for instance $\alpha >2$, since $\gamma_\infty$ is finite.

\smallskip

Moreover, as already observed in the study  of the weak-type $(1,1)$ of operators related to the Ornstein--Uhlenbeck semigroup
 (see \cite[Section 5]{CCS2}),
when proving \eqref{main-ineq}
we mostly need to take into account only points  $x$ belonging to
the ellipsoidal annulus
 \begin{equation*}                         
{\mathcal C_\alpha}=\left\{
x \in\R^n:\, \frac12  \log \alpha\le
R(x)
\le 2  \log \alpha
\right\}.
\end{equation*}

Indeed, the unbounded component of the complement of $\mathcal C_\alpha$ can always be neglected, since
$\gamma_\infty\left( \left\{
x \in\R^n:\,
R(x)
> 2  \log \alpha
\right\}\right)\lesssim 1/\alpha$.

For the variation of $\mathcal H_t$ in $[1,+\infty)$,
the bounded component  of the complement of $\mathcal C_\alpha$ 
is negligible  as well. Indeed, we see  from \cite[(5.4) and Lemma 5.1]{CCS5}
      that  \begin{equation}\label{ineq100}
\int_1^\infty
|\dot K_t (x,u)|\,
dt
\lesssim e^{R(x)}.
\end{equation}
Combined  with   \eqref{important}, where $I=[1,\infty)$  and  $f$  is normalized in $L^1( \gamma_\infty)$,
this leads to
 \begin{align*}                                 
 \|\mathcal H_tf(x)\|_{v(\varrho), [1,+\infty)}&\lesssim e^{R(x)}  \lesssim  \sqrt{\alpha}
  \end{align*}
 for $R(x) < \frac12 \log \alpha$.
 Taking $\alpha$ suitable large, we will have
 $\|\mathcal H_tf(x)\|_{v(\varrho), [1,+\infty)}
<  \alpha$ in the bounded component of the complement of  $\mathcal C_\alpha$.

 \medskip

\section{The variation    of  $\mathcal H_tf(x)$  in   $[1,+\infty)$}\label{t large}

We first introduce the adapted polar coordinates from \cite{CCS2}.
Fix $\beta>0$ and consider the ellipsoid
\begin{equation*}
\ellipses_\beta
=\{x\in\R^n:\, R(x)=
\beta\}
\,.
\end{equation*}
 As a consequence of \cite[formula (4.3)]{CCS2},
the map
$s\mapsto R(D_s z)$ is strictly increasing for each $0 \ne z\in\R^n$.
Thus we may write  any $x\in\R^n,\, x\neq 0$,  uniquely as
\begin{equation*}                   
x=D_s \tilde x
\,,
\end{equation*}
for some $\tilde x\in \ellipses_\beta$
and $s\in\R$, and  $s$ and $\tilde x$ are
 our polar coordinates of $x$.

We  recall  two  results, previously proved by the authors, which will be essential in our arguments.
The following lemma is  \cite[Lemma 5.1]{CCS3} with  $\sigma = 1$;   the factor $1/4$ occurring in \cite{CCS3} is replaced by a generic $\delta > 0$, which causes no problem.
  \begin{lemma}\label{preliminary-t-large-not-kernel}
   \cite{CCS3}
Let $\delta > 0$. For  $x,u\in\R^n$, one has
\begin{equation*}
 \int_1^{+\infty}\exp \Big({-\delta\left|  D_{-t}\,u- x
\right|^2 }\Big)\big|   D_{-t} \,  u\big|\, dt \lesssim
1,                                                                  
\end{equation*}
where the implicit constant  may depend on $\delta$,  $n$, $Q$ and $B$.
\end{lemma}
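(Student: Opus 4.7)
My plan is to reparameterize the trajectory $t\mapsto D_{-t}u$ by its strictly decreasing $|\cdot|_Q$-norm and thereby reduce the quantity to a one-dimensional Gaussian integral. Set $y(t):=D_{-t}u=Q_\infty e^{tB^*}Q_\infty^{-1}u$, which solves the linear ODE $y'(t)=Ay(t)$ with $A:=Q_\infty B^* Q_\infty^{-1}$, and consider
\[
\phi(t):=R(y(t))=\tfrac12 |y(t)|_Q^{\,2}.
\]
Using the Lyapunov identity $BQ_\infty+Q_\infty B^*=-Q$ (obtained by integrating $\tfrac{d}{ds}(e^{sB}Q e^{sB^*})$ from $0$ to $+\infty$), a short symmetrization yields
\[
\phi'(t)=-\tfrac12 \langle Q Q_\infty^{-1}y(t),\,Q_\infty^{-1}y(t)\rangle.
\]
Since $Q$ and $Q_\infty^{-1}$ are positive definite, $\phi'(t)\simeq -|y(t)|_Q^{\,2}=-2\phi(t)$, so $\phi$ is strictly decreasing on $[1,+\infty)$ and tends to $0$ at infinity.

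I then introduce the new variable $r:=|y(t)|_Q=\sqrt{2\phi(t)}$, a $C^1$ diffeomorphism of $[1,+\infty)$ onto $(0,|y(1)|_Q]$. Differentiating $r^2=2\phi$ gives
\[
\left|\tfrac{dr}{dt}\right|=\frac{|\phi'(t)|}{|y(t)|_Q}\simeq |y(t)|_Q=r,
\]
and since $|y(t)|\simeq |y(t)|_Q=r$, the Jacobian absorbs the weight in the integrand: $|y(t)|\,dt\simeq dr$. Combining this with the norm equivalence $e^{-\delta|y-x|^2}\le e^{-c|y-x|_Q^{\,2}}$ (for some $c>0$ depending on $\delta$, $n$, $Q$), the integral transforms as
\[
\int_1^{+\infty} e^{-\delta|D_{-t}u-x|^2}\,|D_{-t}u|\,dt \;\lesssim\; \int_0^{|y(1)|_Q} e^{-c\,|y(t(r))-x|_Q^{\,2}}\,dr.
\]

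The key geometric point is that, by construction, $|y(t(r))|_Q=r$ \emph{exactly}, so that the reverse triangle inequality in the norm $|\cdot|_Q$ yields $|y(t(r))-x|_Q\ge |\,r-|x|_Q\,|$. Extending the range of integration to all of $\R$ reduces the right-hand side to a translated one-dimensional Gaussian,
\[
\int_\R e^{-c(r-|x|_Q)^2}\,dr=\sqrt{\pi/c},
\]
a bound independent of $x$ and $u$, which is the claim.

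The main obstacle I anticipate is the initial Lyapunov computation: one must identify precisely the quantity $R(y(t))=\tfrac12 |y(t)|_Q^{\,2}$ whose derivative and whose level sets line up so that, in the change of variable, the weight $|D_{-t}u|$ in the integrand is absorbed exactly into Lebesgue measure $dr$. Once this alignment is achieved, the reverse triangle inequality in $|\cdot|_Q$ trivialises the remaining estimate, and uniformity in $x$ and $u$ is automatic.
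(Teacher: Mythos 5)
Your proof is correct, and it is essentially the approach underlying the cited Lemma 5.1 of \cite{CCS3}: there, as here, the key observation is that $t\mapsto R(D_{-t}u)$ is strictly monotone with derivative comparable to $|D_{-t}u|^2$ (the paper itself invokes this monotonicity to build its polar-like coordinates $x=D_s\tilde x$), so the substitution $r=|D_{-t}u|_Q$ cancels the weight $|D_{-t}u|\,dt$ against $dr$ and reduces the integral to a one-dimensional Gaussian via the reverse triangle inequality. One minor point worth flagging: you should note explicitly that the argument requires $u\neq 0$ (so that $\phi(t)>0$ and the change of variables is a genuine diffeomorphism), while the case $u=0$ is trivial since the integrand then vanishes identically.
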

  \begin{lemma}\label{bounding-GMAa}  \cite[Lemma 7.2]{CCS3}, \cite[Proposition  6.1]{CCS2}
Let $\delta > 0$ and  $\alpha > 2$, and assume $f$ normalized in $L^1 (\gamma_\infty)$. Then
 \[
\gamma_\infty \Big\{ x = D_s\,\tilde x \in\mathcal C_\alpha:\:
e^{R(x)}
\int
\exp
 \big(
- \delta\, \big|
\tilde x-\tilde u\big|^2\big)
\,|f(u)|\,\misgausskd
 (u)
>
\alpha
\Big\}
\lesssim \frac{1}{\alpha\,\sqrt{\log \alpha}}.
\]
                     \end{lemma}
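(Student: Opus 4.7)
The plan is to use the adapted polar coordinates $x = D_s \tilde x$ to decouple the level--set condition from the ``radial'' parameter $s$, and to extract the enhancement $1/\sqrt{\log\alpha}$ from the rapid decay in $s$ of the density of $\gamma_\infty$ on $\mathcal C_\alpha$. Since the quantity
\begin{equation*}
M_\delta f(\tilde x) \;:=\; \int \exp\bigl(-\delta|\tilde x - \tilde u|^2\bigr)\,|f(u)|\, d\gamma_\infty(u)
\end{equation*}
depends only on $\tilde x$, and $s \mapsto R(D_s \tilde x)$ is strictly increasing, the condition $e^{R(x)} M_\delta f(\tilde x) > \alpha$ translates into $s > s_0(\tilde x)$, where $R(D_{s_0(\tilde x)} \tilde x) = \log(\alpha/M_\delta f(\tilde x))$.

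Writing $d\gamma_\infty$ in polar coordinates as $c\, e^{-R(D_s \tilde x)} J(s,\tilde x)\,ds\,d\sigma(\tilde x)$, with $J$ the associated Jacobian (bounded on $\mathcal C_\alpha$) and $d\sigma$ the induced measure on $\ellipses_\beta$, the $\gamma_\infty$-measure of the set in the statement is bounded by
\begin{equation*}
\int_{\ellipses_\beta} \int_{\{s \,:\, D_s \tilde x \in \mathcal C_\alpha,\; s > s_0(\tilde x)\}} e^{-R(D_s \tilde x)}\, J(s,\tilde x)\,ds\, d\sigma(\tilde x).
\end{equation*}
For fixed $\tilde x$ I would then change variables $r = R(D_s \tilde x)$ in the inner integral. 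The crucial geometric input is that on $\mathcal C_\alpha$ one has $|D_s \tilde x| \simeq \sqrt{\log\alpha}$, and the chain rule applied to $R(y) = \tfrac12 \langle Q_\infty^{-1} y, y\rangle$ together with \eqref{def:Dt} produces $\partial_s R(D_s \tilde x) \gtrsim \sqrt{\log\alpha}$ uniformly there. Substituting $r = R(D_s \tilde x)$ therefore gives
\begin{equation*}
\int_{s_0(\tilde x)}^{\infty} e^{-R(D_s \tilde x)}\, ds \;\lesssim\; \frac{1}{\sqrt{\log\alpha}}\, \int_{\log(\alpha/M_\delta f(\tilde x))}^{\infty} e^{-r}\, dr \;\simeq\; \frac{M_\delta f(\tilde x)}{\alpha\,\sqrt{\log\alpha}}.
\end{equation*}

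To finish, I would integrate in $\tilde x \in \ellipses_\beta$ and swap the order of integration:
\begin{equation*}
\int_{\ellipses_\beta} M_\delta f(\tilde x)\, d\sigma(\tilde x) \;=\; \int |f(u)| \int_{\ellipses_\beta} \exp\bigl(-\delta|\tilde x - \tilde u|^2\bigr)\, d\sigma(\tilde x)\, d\gamma_\infty(u) \;\lesssim\; \|f\|_{L^1(\gamma_\infty)} = 1,
\end{equation*}
the inner integral being uniformly bounded since $\tilde u$ is confined to the compact ellipsoid $\ellipses_\beta$. Combining the two steps delivers the required bound $\lesssim 1/(\alpha\sqrt{\log\alpha})$.

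The main obstacle is the uniform lower bound $\partial_s R(D_s \tilde x) \gtrsim \sqrt{\log\alpha}$ on $\mathcal C_\alpha$, which is precisely what produces the enhancement factor $1/\sqrt{\log\alpha}$. This is a non-trivial quantitative statement about the interplay between the non-symmetric one-parameter group $D_s$ and the quadratic form $R$; it is carried out in detail in \cite[Proposition 6.1]{CCS2} and \cite[Lemma 7.2]{CCS3}, whose arguments I would follow.
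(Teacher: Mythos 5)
This paper does not actually prove Lemma \ref{bounding-GMAa}; it imports it verbatim from \cite[Lemma 7.2]{CCS3} and \cite[Proposition 6.1]{CCS2}, so there is no in-paper proof to compare against. Your overall plan --- translate the level-set condition to $\{s > s_0(\tilde x)\}$ in polar coordinates, integrate the radial tail of $e^{-R}$, and finish by Fubini --- is the strategy used in those references and is the right one.

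There is, however, a genuine error in the radial bookkeeping. Writing $dx = J(s,\tilde x)\,ds\,d\sigma(\tilde x)$, the Jacobian $J$ is \emph{not} bounded on $\mathcal C_\alpha$: in the model case $R(x)=|x|^2/2$, $D_s=e^sI$, one computes $J=|D_s\tilde x|^n/(2\beta)^{(n-1)/2}$, and with $\beta=\log\alpha$ and $D_s\tilde x\in\mathcal C_\alpha$ this is $\simeq\sqrt{\log\alpha}$. At the same time, your lower bound $\partial_s R(D_s\tilde x)\gtrsim\sqrt{\log\alpha}$ understates the true order: by the estimate $\partial_s\sqrt{2R(D_s\tilde x)}\simeq|D_s\tilde x|_Q$ from \cite[Lemma 4.1]{CCS2} (invoked in the proof of Proposition \ref{lemma-integral_dt}), one has $\partial_s R(D_s\tilde x)\simeq|D_s\tilde x|_Q^2\simeq\log\alpha$ on $\mathcal C_\alpha$. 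Your two off-by-$\sqrt{\log\alpha}$ estimates cancel --- the true ratio $J/\partial_s R\simeq\sqrt{\log\alpha}/\log\alpha=(\log\alpha)^{-1/2}$ is exactly the factor you end up writing --- but the reasoning is faulty: if you had sharpened $\partial_s R$ to $\simeq\log\alpha$ while keeping ``$J$ bounded'', you would have obtained the false, too-strong bound $1/(\alpha\log\alpha)$, exposing the gap. The closing Fubini step is fine, though the uniform bound on $\int_{\ellipses_\beta}\exp(-\delta|\tilde x-\tilde u|^2)\,d\sigma(\tilde x)$ is not because $\ellipses_\beta$ is ``compact'' (its diameter grows like $\sqrt{\log\alpha}$) but because the surface area of $\ellipses_\beta\cap B(\tilde u,r)$ is $\lesssim r^{n-1}$ for $r\geq 1$, uniformly in $\beta$.
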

Here the polar coordinates are defined with $\beta = \log \alpha$, the implicit constant is as in Lemma \ref{preliminary-t-large-not-kernel}, and $\sigma = 1$ in \cite[Lemma~7.2]{CCS3}.

We can now deduce the  bound for the variation operator in   $[1,+\infty)$.
\begin{theorem} \label{t>1}
 Let  $1\le \varrho <\infty$. The operator that maps   $f \in L^1(\gamma_\infty)$ to the function
  \begin{equation*}
  \| \mathcal H_t f(x)\|_{v(\varrho), [1,+\infty)}, \quad x \in \mathbb R^n,
\end{equation*}
is of weak type $(1,1)$ with respect to the measure $\gamma_\infty$.
In fact,  the following stronger result holds: if  $\|f\|_{L^1( \gamma_\infty)} = 1$, then
\begin{equation} \label{stronger}
\gamma_\infty
\left\{x\in\R^n:   \| H_t f(x)\|_{v(\varrho),[1,\infty)}
        > \alpha \right\} \lesssim \frac{1}{\alpha \sqrt{\log \alpha}}, \qquad \alpha > 2.
\end{equation}
\end{theorem}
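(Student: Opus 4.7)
The plan is to combine the pointwise kernel bound \eqref{dotK1} for $\dot K_t$ on $[1,+\infty)$ with the fundamental inequality \eqref{important}, and then reduce the resulting expression to a form that falls under Lemma \ref{bounding-GMAa}.

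\textbf{Step 1 (Reduction to a convolution-type operator).} Applying \eqref{important} with $I=[1,+\infty)$ reduces the proof to estimating
\begin{equation*}
\int_{\R^n} \Bigl(\int_1^{+\infty} |\dot K_t(x,u)|\,dt\Bigr)\, |f(u)|\, d\misgaussk(u),
\end{equation*}
so everything hinges on a good pointwise bound for the inner integral.

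\textbf{Step 2 (Refined kernel bound with Gaussian decay in polar coordinates).} Inserting \eqref{dotK1} gives
\begin{equation*}
\int_1^\infty |\dot K_t(x,u)|\,dt \lesssim e^{R(x)} \int_1^\infty \exp\bigl(-c|D_{-t}u - x|^2\bigr)\bigl(|D_{-t}u| + e^{-ct}\bigr)\,dt.
\end{equation*}
The crude bound $\lesssim e^{R(x)}$ is already available from \eqref{ineq100}, but it is insufficient: Lemma \ref{bounding-GMAa} requires Gaussian decay in the polar distance $|\tilde x - \tilde u|$, where $\tilde x, \tilde u \in \ellipses_\beta$ with $\beta = \log\alpha$. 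To produce such decay, I would write $u = D_{s'}\tilde u$ and $x = D_s\tilde x$, so that $D_{-t}u = D_{s'-t}\tilde u$, and exploit the strict monotonicity of $r \mapsto R(D_r z)$: a careful geometric analysis shows that, for $t\ge 1$, the trajectory $\{D_{-t}u\}$ stays uniformly far from $x$ whenever $|\tilde x - \tilde u|$ is large, so that an extra factor $\exp(-c'|\tilde x-\tilde u|^2)$ can be absorbed from the Gaussian before applying Lemma \ref{preliminary-t-large-not-kernel} (with integrability of $e^{-ct}$ on $[1,\infty)$ handling the second term). The outcome is the sharpened pointwise bound
\begin{equation*}
\int_1^\infty |\dot K_t(x,u)|\,dt \lesssim e^{R(x)} \exp\bigl(-c'|\tilde x - \tilde u|^2\bigr).
\end{equation*}

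\textbf{Step 3 (Application of the maximal-type lemma and treatment of the complement of $\mathcal C_\alpha$).} Steps 1 and 2 combined with the normalization $\|f\|_{L^1(\misgaussk)}=1$ give
\begin{equation*}
\|\mathcal H_t f(x)\|_{v(\varrho),[1,+\infty)} \lesssim e^{R(x)} \int \exp\bigl(-c'|\tilde x - \tilde u|^2\bigr)\,|f(u)|\,d\misgaussk(u),
\end{equation*}
and Lemma \ref{bounding-GMAa} (with $\delta=c'$) immediately yields \eqref{stronger} for $x$ in $\mathcal C_\alpha$. For the complement of $\mathcal C_\alpha$, I would invoke the reductions of Section \ref{Simplificationsss}: the unbounded component has $\gamma_\infty$-measure $\lesssim 1/\alpha$, while on the bounded component the estimate \eqref{ineq100} combined with \eqref{important} gives $\|\mathcal H_t f(x)\|_{v(\varrho),[1,+\infty)} \lesssim e^{R(x)} \le \sqrt\alpha < \alpha$ for $\alpha$ large, so this region contributes nothing to the superlevel set.

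\textbf{Main obstacle.} The delicate point is the geometric refinement in Step 2: upgrading $\int_1^\infty |\dot K_t(x,u)|\,dt \lesssim e^{R(x)}$ to the form with Gaussian decay in the polar distance. This rests on understanding how the orbit $t \mapsto D_{-t}u$, for $t\ge 1$, interacts with the foliation of $\R^n$ by the ellipsoids $\ellipses_\beta$, and on leveraging the monotonicity of $R$ along this flow to separate $D_{-t}u$ from $x$ uniformly in $t$ when $|\tilde x - \tilde u|$ is large. Once this is in place, the remaining arguments are essentially bookkeeping against Lemmas \ref{preliminary-t-large-not-kernel} and \ref{bounding-GMAa}.
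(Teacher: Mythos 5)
Your proposal is correct and follows essentially the same route as the paper. The only gap is that you describe the key geometric inequality $|D_{-t}u-x|=|D_{\sigma-t}\tilde u-D_s\tilde x|\gtrsim|\tilde x-\tilde u|$ (valid for $x\in\mathcal C_\alpha$, i.e.\ $R(x)\ge\tfrac12\log\alpha$) as requiring "a careful geometric analysis," whereas the paper simply cites it as an already-proved result, namely \cite[Lemma 4.3(i)]{CCS2}; once that reference is supplied, the remaining steps (halving the constant $c$ in \eqref{dotK1} to peel off the factor $\exp(-c|\tilde x-\tilde u|^2)$, integrating in $t$ via Lemma \ref{preliminary-t-large-not-kernel}, invoking Lemma \ref{bounding-GMAa}, and disposing of the complement of $\mathcal C_\alpha$ via \eqref{ineq100}) match the paper exactly.
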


\begin{proof}
This proof is similar to that of Proposition 6.1 in \cite{CCS5}; we sketch it for the sake of completeness.
Let $f$ be normalized in $L^1(\gamma_\infty)$.
In the light of the considerations made in Subsection \ref{Simplificationsss},
it suffices to consider $\alpha$ large and $x  \in\mathcal C_\alpha$.
Using polar coordinates with $\beta =  \log\alpha$
for $x\in\mathcal C_\alpha$ and $u\ne 0$,
we write $x=D_s\,\tilde x$ and
$u=D_\sigma\,\tilde u$, with  $s,\sigma\in\R$.
Since $R(x)\ge \beta/2$, \, \cite[Lemma 4.3{\it{(i)}}]{CCS2}
yields
\begin{align*}
|D_{-t}\,u-x| &= |D_{\sigma-t}\,\tilde u-D_s\, \tilde x|
\gtrsim |\tilde x-\tilde u|, \qquad
 x  \in\mathcal C_\alpha.
\end{align*}

By reducing the value of $c$ in the exponential factor in  \eqref{dotK1},
we arrive at
 \begin{equation*}            
|\dot K_t (x,u)| \lesssim
 e^{R(x)}
 \,
\exp
\Big(
-c\,
 |\tilde x-\tilde u|^2\Big)
\,\exp{\left(-c\,|  D_{-t}\,u- x|^2\right)}\,(|D_{-t}\,u| + e^{-ct}), \qquad t>1.
\end{equation*}
An  application of Lemma \ref{preliminary-t-large-not-kernel}  leads to
\begin{equation*}                               
\int_1^\infty
|\dot K_t (x,u)|
dt
 \lesssim  e^{R( x)}
  \,
 \exp\big(- c\,\big|\tilde x-\tilde u\big|^2\big), \qquad
 x  \in\mathcal C_\alpha,
\end{equation*}
and then \eqref{important}  yields
 \begin{align*}                                 
 \|\mathcal H_tf(x)\|_{v(\varrho), [1,\infty)}      \lesssim  e^{R( x)}
  \,\int 
 \exp\big(- c\,\big|\tilde x-\tilde u\big|^2\big) \,|f(u)|\,\misgausskd.
 \end{align*}
Finally, Lemma \ref{bounding-GMAa} implies \eqref{stronger}, and the weak type $(1,1)$  also follows.
\end{proof}

\begin{remark}
Theorem \ref{t>1} says that for the variation in $[1,\infty)$
 the standard weak type $(1,1)$ estimate  is enhanced by a logarithmic factor.
This  phenomenon was observed  in the one-dimensional case in \cite{CCS6}.
\end{remark}

\section{Local versus global region}
\label{local1}

From now on, we shall focus on the variation of $\mathcal H_t$ in the interval $(0,1]$.
This case requires   a further distinction between the local and the global parts of  $\mathcal H_t$.
We  start describing  the localization procedure.

\subsection{\color{black}{Splitting of $\R^n$ into  rings}}\label{subs1}

Let
\begin{align}\label{def:rings}
R_j =
\{x\in \R^n : j\le R(x)\le j+1\}, \qquad j =0,1,\ldots.
\end{align}
These rings cover $\R^n$ and are pairwise disjoint except for boundaries.
We also set $R_j =
\emptyset$  if $j<0$.

 The width  of $R_j$, defined as the $|\cdot|_Q$ distance between the two components of its boundary,  is for
 $j \ge 1$
 \begin{equation}  \label{width}
 \sqrt 2\left(\sqrt{j+1}-\sqrt j\right) =\frac{\sqrt 2}{\sqrt{j+1}+\sqrt j}
  \in \left(\frac1{2\sqrt{j}}, \frac{\sqrt 2}{\sqrt{j}}\right),
 \end{equation}
as easily verified.

We take a sequence of smooth non-negative functions $(r_j)_{j\in\N}$
satisfying $\sum_{j=0}^\infty r_j (x)=1$ for all $x$, with $\text{supp}\,r_j\subseteq R_j\cup R_{j+1}$.
Further, we may choose them such that
\begin{equation}\label{nabla-r}
\big|\nabla r_j (x)\big|\lesssim {1+|x|}.
\end{equation}

We also introduce slightly larger functions  $ \widetilde{r}_j \in\mathcal C_0^\infty (\mathbb R^n)$
taking values in $[0,1]$ such that
 $\widetilde{r_j} = 1 $ in  $\cup_{\nu=j-1}^{j+2} R_{\nu}$  and
$\text{supp}\,\widetilde{r_j}\subset \cup_{\nu=j-2}^{j+3} R_{\nu}$.}
They also satisfy
\begin{equation}\label{nabla2}
\big|\nabla \widetilde{r}_j (x)\big|\lesssim {1+|x|}.
\end{equation}

Then the supports of these   $\widetilde{r_j}$ have bounded overlap,
and
\begin{equation}\label{densityinring}
 e^{-R(x)}\simeq e^{-j} \qquad \text{for} \qquad
 x\in \text{supp}\,\widetilde{r_j}.
\end{equation}

\subsection{The splitting of $\mathcal H_t $}

 We can now split the operator $\mathcal H_t f$ in a local and a global part, in a way adapted to the rings.                       
The local part  is defined by
\begin{align*}                     
    \mathcal H_t^{\mathrm{loc}}  f(x)
    :=& \, \sum_{j=0}^\infty \widetilde r_j (x)
 \mathcal H_t\big(f r_j\big)(x)\\
 = & \, \sum_{j=0}^\infty \widetilde r_j (x)
      \int_{\R^n} K_t(x,u)\,  f(u)\, r_j (u)\,
d\gamma_\infty(u).
\notag
    \end{align*}
This sum is locally finite and thus well defined for any $f \in L^1(\gamma_\infty)$,
because of the bounded overlap of the sets $\text{supp}\:\widetilde{r_j}$.         

Setting
\begin{equation*}                          
 \eta(x,u)=\sum_{j=0}^\infty
\widetilde{r_j} (x) \,r_j (u),
\end{equation*}
we get
\begin{align}\label{kernelHloc}
    \mathcal H_t^{\mathrm{loc}}  f(x)& =   \int_{\R^n} K_t(x,u)\, \eta (x,u)\, f(u)\,
d\gamma_\infty(u),
    \end{align}
and $0 \le \eta(x,u)\le 1$  for all $x,u\in\R^n$.
Moreover,  if $\mathrm{supp}\, f \subset R_j\cup R_{j+1}$, then the support of $\mathcal H_t^{\mathrm{loc}} f $ is contained in $\cup_{\nu=j-2}^{j+4} R_{\nu}$.

The global part of  $\mathcal H_t$ is  defined by $  \mathcal H_t^{\mathrm{glob}} =  \mathcal  H_t -    \mathcal H_t^{\mathrm{loc}}$, or equivalently
\begin{equation*}                        
     \mathcal H_t^{\mathrm{glob}}  f(x)=   \int_{\R^n} K_t(x,u)\, (1- \eta (x,u))\, f(u)\,
d\gamma_\infty(u), \qquad x \in \R^n.
\end{equation*}

\smallskip

 The next two lemmas give  sufficient conditions for the kernel  $K_t(x,u)\, (1- \eta (x,u))$ of the global part to vanish.

 \begin{lemma}  \label{uno}
 Let $j \in \{0,1,\dots\}$.
 If  $x,\,u\in R_j \cup R_{j+1}$, then $\eta(x,u) = 1$.
 \end{lemma}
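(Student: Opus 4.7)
The plan is to identify, for each fixed $u \in R_j \cup R_{j+1}$, the finite set of indices $k$ for which $r_k(u) \neq 0$, and then to verify that $\widetilde r_k(x) = 1$ for every such $k$ and every $x \in R_j \cup R_{j+1}$. Once both facts are in hand, the definition of $\eta$ collapses to the partition-of-unity identity $\sum_k r_k(u) = 1$.

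First I would pin down the admissible values of $k$. Since $\mathrm{supp}\, r_k \subseteq R_k \cup R_{k+1}$ and the rings $R_\nu$ are pairwise disjoint except for boundaries, the condition $r_k(u)\neq 0$ combined with $u \in R_j$ forces $k \in \{j-1, j\}$, while $u \in R_{j+1}$ forces $k \in \{j, j+1\}$. Thus only $k \in \{j-1, j, j+1\}$ can contribute to $\eta(x,u)$ (and boundary points merely enlarge this set trivially).

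Next, for each of these three indices I would check that $\widetilde r_k \equiv 1$ on $R_j \cup R_{j+1}$. By construction $\widetilde r_k = 1$ on $\bigcup_{\nu=k-1}^{k+2} R_\nu$, and the window $[k-1, k+2]$ contains both $j$ and $j+1$ whenever $k \in \{j-1, j, j+1\}$, as a direct arithmetic check shows.

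Combining these two observations, for $x,u \in R_j \cup R_{j+1}$ one obtains
$$\eta(x,u) = \sum_{k=0}^\infty \widetilde r_k(x)\, r_k(u) = \sum_{k \in \{j-1,j,j+1\}} 1 \cdot r_k(u) = \sum_{k=0}^\infty r_k(u) = 1,$$
with the convention $R_\nu = \emptyset$ for $\nu < 0$ handling the edge case $j=0$. The argument is essentially index bookkeeping, and I anticipate no real obstacle: the two-ring buffer built into the definition of $\widetilde r_k$ is precisely calibrated so that any pair $(x,u)$ confined to a two-ring window forces $\eta = 1$.
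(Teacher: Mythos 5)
Your proposal is correct and follows essentially the same route as the paper's proof: restrict the sum to $k \in \{j-1,j,j+1\}$ using the support of $r_k$, verify $\widetilde r_k(x)=1$ on $R_j \cup R_{j+1}$ for those $k$, and conclude from the partition of unity. The index check on $[k-1,k+2]$ is the same bookkeeping the paper performs implicitly.
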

 \begin{proof}
 When $u\in R_j \cup R_{j+1}$ the function $r_i(u)$ can be nonzero only for $i\in\{j-1,j,j+1\}$, and then
$r_{j-1}(u)+r_j(u)+r_{j+1}(u)=1$.

 Also, for $x\in R_j \cup R_{j+1}$ one has ${\widetilde r}_{j-1} (x)={\widetilde r}_{j} (x)={\widetilde r}_{j+1} (x) = 1$.
This implies that          
$\eta(x,u)=\sum_{\nu =j-1}^{j+1} \widetilde{r_\nu} (x)\, r_\nu(u) =1$.
\end{proof}

  \begin{lemma} \label{due-molt}
If
 \begin{equation*}
 |x-u|_Q\le \frac1{2(1+|x|_Q)},
\end{equation*}
then  $\eta(x,u) = 1$.
 \end{lemma}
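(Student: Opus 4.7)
The plan is to reduce $\eta(x,u)=1$ to showing that $\widetilde{r_j}(x)=1$ for every index $j$ with $r_j(u)\neq 0$. Since $(r_j)_{j\in\N}$ is a partition of unity, this at once yields
\[
\eta(x,u)=\sum_{j=0}^\infty \widetilde{r_j}(x)\,r_j(u)=\sum_{j=0}^\infty r_j(u)=1.
\]
So the entire problem is reduced to a compatibility check between the supports of $r_j$ and the level-set $\{\widetilde{r_j}=1\}$.

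First I would rewrite both conditions in terms of the quadratic form $R$. Since $r_j$ is supported in $R_j\cup R_{j+1}$, the hypothesis $r_j(u)\neq 0$ forces $R(u)\in[j,j+2]$. On the other side, $\widetilde{r_j}(x)=1$ is ensured by $x\in\bigcup_{\nu=j-1}^{j+2}R_\nu$, i.e.\ $R(x)\in[j-1,j+3]$ (with the convention $R_{-1}=\emptyset$). Thus the lemma will follow once I establish the uniform estimate $|R(x)-R(u)|\le 1/2$ under the hypothesis, since then $R(u)\in[j,j+2]$ forces $R(x)\in[j-\tfrac12,j+\tfrac52]\subset[j-1,j+3]$.

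Next I would prove this key estimate by exploiting $R(x)=\tfrac12|x|_Q^2$ and the factorization
\[
R(x)-R(u)=\tfrac12\bigl(|x|_Q-|u|_Q\bigr)\bigl(|x|_Q+|u|_Q\bigr).
\]
The triangle inequality for the norm $|\cdot|_Q$ gives $\bigl||x|_Q-|u|_Q\bigr|\le|x-u|_Q$, and the hypothesis $|x-u|_Q\le 1/(2(1+|x|_Q))$ yields both
\[
\bigl||x|_Q-|u|_Q\bigr|\le\frac{1}{2(1+|x|_Q)}\qquad\text{and}\qquad |u|_Q\le|x|_Q+\tfrac12,
\]
so $|x|_Q+|u|_Q\le 2(1+|x|_Q)$. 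Multiplying, the factor $1+|x|_Q$ cancels and leaves
\[
|R(x)-R(u)|\le\frac12\cdot\frac{1}{2(1+|x|_Q)}\cdot 2(1+|x|_Q)=\frac12,
\]
which is exactly what was needed.

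I do not expect any genuine obstacle here; the only delicate point is the cancellation of the factor $1+|x|_Q$, which is precisely why the local region in the Ornstein--Uhlenbeck setting is defined with this particular $x$-dependent radius. The boundary case $j=0$ causes no issue, since $R(x)\ge 0$ always and the $1$-set of $\widetilde{r_0}$ is $R_0\cup R_1\cup R_2=\{R\in[0,3]\}$, which still contains the interval $[0,5/2]$.
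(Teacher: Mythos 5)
Your proof is correct, and it takes a genuinely different (and arguably cleaner) route than the paper's. The paper fixes the ring $R_j$ containing $x$, bounds $|x-u|_Q$ by the widths of the neighbouring rings $R_{j\pm 1}$ via \eqref{width}, deduces that $u$ must lie in $R_{j-1}\cup R_j$ or $R_j\cup R_{j+1}$, and then invokes Lemma~\ref{uno}. You instead prove the single scalar estimate $|R(x)-R(u)|\le 1/2$ by factoring $R(x)-R(u)=\tfrac12(|x|_Q-|u|_Q)(|x|_Q+|u|_Q)$, bounding the first factor by the hypothesis and the second by $2(1+|x|_Q)$, so the $x$-dependent radius cancels exactly. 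This makes the lemma self-contained: it bypasses both the width formula \eqref{width} and Lemma~\ref{uno}, and it exhibits more transparently why the threshold $1/(2(1+|x|_Q))$ is the natural one. The one thing worth making explicit is that the bound $|u|_Q\le|x|_Q+\tfrac12$ uses $1+|x|_Q\ge 1$, which you do use implicitly; otherwise the argument is complete, including the edge case $j=0$ where $R_{-1}=\emptyset$.
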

 \begin{proof}
Suppose $x\in R_j$. Then $|x|_Q \ge \sqrt{2j}$ and we can write
   \begin{equation*}
 |x-u|_Q\le \frac1{2(1+|x|_Q)} \le  \frac1{2(1+\sqrt{j})} \le \frac1{2\sqrt{j+1}}.
\end{equation*}
  From this and \eqref{width}, we see that $|x-u|_Q$ is less than the widths of $R_{j-1}$ and  $R_{j+1}$ (only that of $R_{j+1}$ if $j=0$ or $j=1$).
  Since   $x \in R_j$,  this means that both $x$ and $u$ must be  in  $ R_{j-1}  \cup  R_j$
   or  $ R_j \cup R_{j+1}$. From Lemma \ref{uno} we then get the assertion.
   \end{proof}

   We will need an estimate for the gradient of $\eta(x,u)$. If a point $(x,u)$ is in the support of $\eta$, then $x$ and $u$ are both in the support of some
$\widetilde{r}_j$. It follows that $1+|u| \simeq 1+|x|$, and \eqref{nabla-r} and \eqref{nabla2} then
imply that
\begin{equation}\label{gradeta}
  |\nabla_x\, \eta(x,u)| +  |\nabla_u\, \eta(x,u)| \lesssim  1+|x|.
\end{equation}

\medskip

\section{The variation of the global part}\label{The variation of the global part}

The result of this section is the following.

 \begin{theorem} \label{t<1,global}
 For each  $\varrho \ge 1$ the operator that maps   $f \in L^1(\gamma_\infty)$ to the function
  \begin{equation*}
  \|{  \mathcal H}_t^{\mathrm{glob}} f(x)\|_{v(\varrho), (0,1]}, \quad x \in \mathbb R^n,
\end{equation*}
is of weak type $(1,1)$ with respect to the measure $\gamma_\infty$.
\end{theorem}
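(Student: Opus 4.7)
The proof will rest on the observation, highlighted in the introduction, that in the global region the number of sign changes of $t \mapsto \dot K_t(x,u)$ on $(0,1]$ is uniformly bounded, so that total variation of the kernel reduces to a supremum.

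Since $\|\cdot\|_{v(\varrho),(0,1]}$ is decreasing in $\varrho$, it suffices to treat $\varrho = 1$. Applying \eqref{important} to $\mathcal{H}_t^{\mathrm{glob}}$, whose kernel $K_t(x,u)(1-\eta(x,u))$ carries a time-independent cutoff, one obtains
$$\|\mathcal{H}_t^{\mathrm{glob}} f(x)\|_{v(1),(0,1]} \le \int_{\R^n} (1-\eta(x,u)) \Big(\int_0^1 |\dot K_t(x,u)|\,dt\Big) |f(u)|\, d\gamma_\infty(u).$$
To bound the inner integral, invoke the bounded-zero-count property from \cite{CCS5}: there exists an integer $N$, independent of $(x,u) \in \R^n \times \R^n$, such that $t \mapsto \dot K_t(x,u)$ has at most $N$ zeros in $(0,1]$. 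Denoting these zeros together with the endpoints by $0 = \tau_0 < \tau_1 < \cdots < \tau_{M+1} = 1$ with $M \le N$, the derivative $\dot K_t(x,u)$ has constant sign on each subinterval $(\tau_i, \tau_{i+1})$, and so
$$\int_0^1 |\dot K_t(x,u)|\,dt = \sum_{i=0}^{M} |K_{\tau_{i+1}}(x,u) - K_{\tau_i}(x,u)| \le 2(N+1)\sup_{t \in (0,1]} K_t(x,u),$$
with the limit at $t \to 0^+$ vanishing in the global region, where Lemma \ref{due-molt} forces $x$ and $u$ to be separated (so the Gaussian factor in \eqref{litet} decays to $0$ as $t \to 0^+$).

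Theorem \ref{t<1,global} therefore reduces to proving that the sublinear operator
$$f \mapsto \int_{\R^n} (1 - \eta(x,u)) \sup_{t \in (0,1]} K_t(x,u)\, |f(u)|\, d\gamma_\infty(u)$$
is of weak type $(1,1)$ with respect to $\gamma_\infty$. This operator is controlled by the global part of the small-time maximal operator for $(\mathcal{H}_t)_{t>0}$, whose weak type $(1,1)$ has been established in our earlier work: the pointwise estimate \eqref{litet}, combined with the separation $|x-u|_Q \gtrsim 1/(1+|x|_Q)$ from Lemma \ref{due-molt}, reduces matters to a level-set analysis analogous to (and in fact simpler than) the one carried out in Section \ref{t large}, via a polar-coordinate decomposition in the spirit of Lemmas \ref{preliminary-t-large-not-kernel} and \ref{bounding-GMAa}.

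The principal obstacle is not the variation-to-supremum reduction, which is essentially algebraic, but the verification that the kernel $(1-\eta(x,u))\sup_{t \in (0,1]} K_t(x,u)$ yields a weak type $(1,1)$ operator. Handling the supremum in $t$ requires tracking the dependence of $|u - D_t x|^2/t$ on $t$ inside the Gaussian factor of \eqref{litet} and optimizing over $t$; once this $t$-maximization is carried out, the resulting stationary kernel falls within the scope of the geometric techniques developed for $(\mathcal{H}_t)_{t>0}$ in \cite{CCS1, CCS2}, from which the weak type $(1,1)$ estimate follows.
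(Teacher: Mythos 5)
Your proposal is correct and follows essentially the same route as the paper: reduce via \eqref{important} to bounding $\int_0^1|\dot K_t(x,u)|\,dt$, invoke the uniform bound on the number of zeros of $t\mapsto\dot K_t(x,u)$ from \cite[Proposition 9.1]{CCS5} together with the fundamental theorem of calculus to dominate this integral by $\sup_{0<t\le 1}K_t(x,u)$, and then reduce to the weak type $(1,1)$ of the global maximal operator. The only difference is in the last step: the paper cites \cite[Theorem 10.1]{CCS5} for the maximal operator bound (Theorem \ref{stima per M con t piccolo}), noting that its proof depends only on the implication $\eta(x,u)<1 \Rightarrow |x-u|_Q>1/(2(1+|x|_Q))$ (Lemma \ref{due-molt}) rather than on the precise form of $\eta$, whereas you sketch a proof-from-scratch via polar coordinates and $t$-optimization of the Gaussian factor; that sketch is plausible but vaguer than what a self-contained argument would require, and the direct citation is the intended (and cleaner) route.
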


\begin{proof}

   This proof follows the pattern from that of  \cite[Proposition 10.2]{CCS5}. In particular, we will
 need the following result                        
 concerning a maximal operator.

\begin{theorem}\label{stima per M con t piccolo}
The maximal operator defined by
\begin{align*}
S_{0}^{{\rm{glob}}}
 f(x) = \int \sup_{0<t\leq 1}K_t(x,u)\,
\big(1-\eta(x,u)\big) \,|f(u)|\, d\gamma_\infty(u)
\end{align*}
is
 of weak type $(1,1)$ with respect to the invariant measure $\gamma_\infty$.
\end{theorem}

This statement seems to coincide with that of \cite[Theorem 10.1]{CCS5}, but
 our $\eta(x,u)$ is not the same as that in \cite{CCS5}. However, by tracing the proof in \cite{CCS5},
one sees that what matters is only the implication
  \begin{equation*}            
  \eta(x,u)<1 \qquad    \Rightarrow    \qquad
 |x-u|_Q > \frac1{2(1+|x|_Q)},
\end{equation*}
which is a consequence of our Lemma \ref{due-molt}. In this way, Theorem \ref{stima per M con t piccolo} follows.

 \vskip5pt

   We will also need
   \cite[Proposition 9.1]{CCS5}. It says
 that for any $(x,u)\in \mathbb R^n\times\mathbb R^n$,
the number of zeros $N(x,u)$ in $(0,1]$ of the map $t \mapsto \dot K_t(x,u)$ is bounded
by a positive integer  $\bar N$ that depends only on $n$ and $B$.

As in \cite[Proposition 10.2]{CCS5},
we denote these zeros by $t_1(x,u)< \cdots< t_{N(x,u)}(x,u)$,
and set $t_0(x,u) =0$, \hskip4pt
$t_{N(x,u)+1}(x,u) =1$.
Since $K_t(x,u)$ vanishes at $t=0$ if $x\ne u$,
it follows from the fundamental theorem of calculus that 
\begin{multline*}
\int_0^1 \left|\dot K_t(x,u)\right| dt
= \sum_{i=0}^{N(x,u)} \left|\int_{t_i(x,u)}^{t_{i+1}(x,u)} \dot K_t(x,u) dt\right|
\\
= \sum_{i=0}^{N(x,u)} \left|K_{t_{i+1}(x,u)}(x,u) - K_{t_i(x,u)}(x,u)\right|
\\
\leq 2
\sum_{i=0}^{N(x,u)+1}K_{t_i(x,u)}(x,u)
\;\lesssim \;\bar N
\sup_{0<t\leq 1}K_{t}(x,u).
\end{multline*}

By \eqref{important}, which remains valid with the extra factor $1-\eta(x,u)$, we obtain
 \begin{equation*}                   
\| \mathcal H_t^{\mathrm{glob}} f(x)\|_{v(\varrho), (0,1]} \lesssim
\int     \sup_{0<t\leq 1}K_t(x,u)\,(1-\eta(x,u)\, |f(u)|\,d\gamma_\infty(u).
 \end{equation*}
 Now Theorem \ref{stima per M con t piccolo} implies the assertion of Theorem \ref{t<1,global}.
     \end{proof}

\vskip15pt

\section{The local part for small $t$}\label{tsmall}
In this section, we prove the following result.          

   \begin{theorem}  \label{locsmallt}
     For each  $\varrho > 2$ the operator that maps   $f \in L^1(\gamma_\infty)$ to the function
  \begin{equation*}
  \|\mathcal H_t^{\mathrm{loc}} f(x)\|_{v(\varrho),(0,1]}, \quad x \in \mathbb R^n,
\end{equation*}
is of weak type $(1,1)$ with respect to the measure $\gamma_\infty$.
   \end{theorem}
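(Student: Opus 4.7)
The plan is to recast $\mathcal H_t^{\mathrm{loc}}$ as a $\mathcal B$-valued singular integral and apply the vector-valued Calderón--Zygmund theory in the spirit of \cite{Harboure2,Harboure,Crescimbeni}. Fix a Banach space $\mathcal B$ of (equivalence classes of) functions on $(0,1]$ whose norm realises the $v(\varrho)$-seminorm \eqref{def_var_gen}, and set $\Phi f(x) := \{t\mapsto \mathcal H_t^{\mathrm{loc}}f(x)\}_{t\in(0,1]}$, so that $\|\mathcal H_t^{\mathrm{loc}}f(x)\|_{v(\varrho),(0,1]} = \|\Phi f(x)\|_{\mathcal B}$. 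The goal is then to prove that $\Phi : L^1(\gamma_\infty) \to L^{1,\infty}(\gamma_\infty;\mathcal B)$ is bounded.

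The first step is a ring-by-ring reduction to Lebesgue measure. By \eqref{kernelHloc} and the bounded overlap of $\{\widetilde r_j\}$, it is enough to control, uniformly in $j$, the $\mathcal B$-valued operator
\[
T_j g(x) := \widetilde r_j(x) \int K_t(x,u)\, r_j(u)\, g(u)\, d\gamma_\infty(u).
\]
Since \eqref{densityinring} gives $e^{-R(x)}\simeq e^{-R(u)}\simeq e^{-j}$ on $\mathrm{supp}(\widetilde r_j\otimes r_j)$, on this pair of rings $d\gamma_\infty$ and Lebesgue measure differ by a constant factor on both the $x$- and the $u$-side that cancels out. Hence weak type $(1,1)$ for $T_j$ with respect to $\gamma_\infty$ is equivalent, uniformly in $j$, to the same statement with respect to Lebesgue measure, and the latter is what I would establish by vector-valued Calderón--Zygmund theory.

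To apply that theory, one needs a strong-type bound for $T_j$ plus size and Hörmander-type smoothness of its $\mathcal B$-valued kernel. For the strong-type input, \cite[Corollary 4.5]{Le Merdy} yields, for $\varrho>2$ and $1<p<\infty$, the $L^p(\gamma_\infty;\mathcal B)$-boundedness of the full operator $f\mapsto\{t\mapsto\mathcal H_tf\}$; combining this with the pointwise control $\|\Phi_{\mathrm{glob}}f(x)\|_{\mathcal B}\lesssim S_0^{\mathrm{glob}}f(x)$ used in the proof of Theorem \ref{t<1,global} and the classical $L^p$-boundedness of the Ornstein--Uhlenbeck maximal operator, one obtains $L^p(\gamma_\infty;\mathcal B)$-boundedness of $\Phi$, and hence of each $T_j$ on the corresponding pair of rings. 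The size estimate for the kernel is derived from \eqref{var}:
\[
\|\widetilde r_j(x)\,r_j(u)\,e^{-R(u)}\,K_\bullet(x,u)\|_{\mathcal B}\;\le\;e^{-R(u)}\int_0^1|\dot K_t(x,u)|\,dt,
\]
and \eqref{dotKeps} together with $e^{R(x)-R(u)}\simeq 1$ on the support reduces matters to bounding
\[
\int_0^1 t^{-n/2}\exp\!\left(-c\,\tfrac{|u-D_tx|^2}{t}\right)\bigl(t^{-1}+|x|\,t^{-1/2}\bigr)\,dt.
\]
The Hörmander smoothness in $x$ or $u$ is obtained by differentiating $K_t\eta$, using analogous pointwise bounds on $\nabla\dot K_t$ together with \eqref{gradeta}.

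The main obstacle will be carrying out these kernel estimates uniformly in $j$: showing that the displayed integral behaves like $|x-u|^{-n}$, with an extra factor $|x-u|^{-1}$ for the gradient, despite the mismatch between the Euclidean distance $|x-u|$ and the drifted distance $|u-D_tx|$ appearing in the Gaussian exponent. The way to cope is to exploit that on $\mathrm{supp}(\widetilde r_j\otimes r_j)$ one has $|x|\simeq|u|\simeq\sqrt j$, so that $D_tx-x=O(t|x|)$ stays small compared to $\sqrt t$ for the relevant range of $t$, after which standard manipulations recover the Euclidean scale. A secondary technicality is the measurability of the vector-valued kernel (treated via Bochner integration, as foreshadowed by the Appendix). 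Once size and Hörmander conditions are verified, the vector-valued Calderón--Zygmund theorem delivers the weak type $(1,1)$ for $T_j$ against Lebesgue measure, uniformly in $j$; reassembling via the bounded overlap of $\{\widetilde r_j\}$ and \eqref{densityinring} yields the stated bound for $\Phi$ against $\gamma_\infty$.
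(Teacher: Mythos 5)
Your proposal is correct and follows essentially the same route as the paper: it builds the Banach space realising the $v(\varrho)$-seminorm (the paper's $F_\varrho$), uses the ring decomposition together with \eqref{densityinring} to transfer the problem from $\gamma_\infty$ to Lebesgue measure, feeds in \cite{Le Merdy} for the strong $(p,p)$ input, and obtains the size and Hörmander kernel bounds by integrating the pointwise estimate \eqref{dotKeps} and an analogous bound on $\nabla_2\dot K_t$, using \eqref{gradeta} for the cutoff --- exactly the ingredients in the paper's Propositions \ref{prop81}, \ref{prp2}, \ref{lemma-integral_dt} and \ref{lemma-Calderon-no}.

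One small organizational remark: your route to the strong-type bound is slightly longer than necessary. You deduce $L^p$-boundedness of the local variation by applying Le Merdy--Xu to the \emph{full} operator and subtracting the global part, which requires the $L^p(\gamma_\infty)$-boundedness of the kernel maximal operator $S_0^{\mathrm{glob}}$; the paper only states its weak type $(1,1)$, so you would need to supply an $L^p$ bound (it is true, but it is an extra verification). The paper's Proposition \ref{prp2} avoids this by applying Le Merdy--Xu directly to $\mathcal H_t(g\,r_j\,e^{R})$ ring by ring: since $\|T_j g\|_{F_\varrho}\le \|V(g\,r_j)\|_{F_\varrho}$ pointwise, the ring-wise $L^p(\gamma_\infty)$ bound follows immediately, and \eqref{densityinring} converts it to Lebesgue measure with the right power of $e^j$ after summing. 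Otherwise the two arguments coincide, including the point you flag about uniformity in $j$ via $|x|\simeq|u|\simeq\sqrt j$, which is precisely what Proposition \ref{lemma-integral_dt} exploits, and the Bochner-integrability issue, which is indeed handled in the Appendix.
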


The proof
relies on   vector-valued Calder\'on--Zygmund theory.

\subsection{Preparations}
Part of our notation in this section follows that  of  \cite{Crescimbeni}.
Let  $\Theta$ be the set of all finite, increasing sequences in $(0,1]$ written
\begin{equation*}
 \succvarepsilon=(\varepsilon_i)_0^N
\end{equation*}
for some $ N = N(\succvarepsilon)\in \N_+.${\Blue{
}}
Then we let $F$ be the vector space of all functions
\begin{equation*}
 g: \{(i,\succvarepsilon) \in \N \times \Theta: 1 \le i \le N(\succvarepsilon)\} \to \R.
\end{equation*}
For $\varrho \in [1,\infty)$ we define $ F_\varrho$ as that subspace of $ F$ consisting of those $g $ for which the mixed norm
\begin{equation}
 \|g\|_{ F_\varrho}:=
 \sup_{\succvarepsilon \in\Theta}
 \left(\sum_{i=1}^{N(\succvarepsilon)}
 |g(i,\succvarepsilon)|^\varrho\right)^{1/\varrho}
\end{equation}
    is finite.     Then  $ F_\varrho$
     is a normed space and a Banach space.

We shall work with the operator $V$ that maps real-valued functions $f\in L^1(\gamma_\infty)$ to $F$-valued functions defined in $\R^n$
and is given by
\begin{align*}
 Vf(x)(i, \succvarepsilon) = \mathcal H_{\varepsilon_{i}}f(x)-
\mathcal H_{\varepsilon_{i-1}}f(x),  \qquad i = 1,\dots,N(\succvarepsilon ), \;\; \succvarepsilon \in \Theta.
 \end{align*}
      Similarly, we set
 \begin{align} \label{Vloc}
 V^{\text{loc}}f(x)(i, \succvarepsilon) = \mathcal H^{\text{loc}}_{\varepsilon_{i}}f(x)-
\mathcal H^{\text{loc}}_{\varepsilon_{i-1}}f(x),  \qquad i = 1,\dots,N(\succvarepsilon),
 \end{align}
 and analogously for $V^{\text{glob}}f$.

Then for  $\varrho \in [1,\infty)$
\begin{equation}  \label{equivalence}
 \|Vf(x)\|_{F_\varrho}=\| \mathcal H_t f(x)\|_{v(\varrho), (0,1]},
\end{equation}
and similar equalities hold with superscripts $\text{loc}$ or $\text{glob}$.

Theorem \ref{locsmallt} can now be rewritten in the following equivalent way.
 \begin{theorem} \label{locsmallt_v2}
     For each  $\varrho > 2$ the operator that maps   $f \in L^1(\gamma_\infty)$ to the function
\begin{equation*}
x\mapsto\|V^{\rm{loc}}f(x)\|_{F_\varrho}\,, \qquad x\in\R^n,
\end{equation*}
is of weak type $(1,1)$ with respect to the measure $\gamma_\infty$.
  \end{theorem}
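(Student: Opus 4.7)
The plan is to prove Theorem \ref{locsmallt_v2} by vector-valued Calder\'on--Zygmund theory applied to $V^{\mathrm{loc}}$ as a singular integral operator with values in the Banach space $F_\varrho$, following the approach developed in \cite{Harboure2, Harboure, Crescimbeni}. The first step is a ring-by-ring transition from $\gamma_\infty$ to Lebesgue measure: writing $f=\sum_j fr_j$, the support of $\mathcal H_t^{\mathrm{loc}}(fr_j)$ remains in a fixed union of rings around $R_j$, and by \eqref{densityinring} the $\gamma_\infty$-density satisfies $e^{-R(\cdot)}\simeq e^{-j}$ uniformly on both the support of $fr_j$ and that of its image. This reduces the desired weak-type $(1,1)$ inequality to a uniform-in-$j$ weak-type $(1,1)$ estimate against Lebesgue measure for an $F_\varrho$-valued singular integral whose kernel $\mathcal K(x,u)$ is formally given by
\[
\mathcal K(x,u)(i,\succvarepsilon) = \eta(x,u)\,(2\pi)^{-n/2}(\det Q_\infty)^{-1/2}\,e^{-R(u)}\,\bigl(K_{\varepsilon_i}(x,u)-K_{\varepsilon_{i-1}}(x,u)\bigr).
\]
The measurability and Bochner-integrability technicalities that arise here are exactly the issues the paper defers to the Appendix.

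The second step is to supply the base strong-type bound required by CZ theory. For $\varrho>2$, Corollary 4.5 of \cite{Le Merdy} gives $L^p(\gamma_\infty)$-boundedness of the full variation $V$ for every $p\in(1,\infty)$; combined with the triangle inequality in $F_\varrho$, namely $\|V^{\mathrm{loc}}f\|_{F_\varrho}\le \|Vf\|_{F_\varrho}+\|V^{\mathrm{glob}}f\|_{F_\varrho}$, and an $L^p$ bound for $V^{\mathrm{glob}}$ (which follows from the same pointwise domination $\|V^{\mathrm{glob}}f(x)\|_{F_\varrho}\lesssim S_0^{\mathrm{glob}}f(x)$ used in the proof of Theorem \ref{t<1,global}, together with the fact that $S_0^{\mathrm{glob}}$ is trivially $L^\infty$-bounded), one obtains $L^p(\gamma_\infty)$-boundedness of $V^{\mathrm{loc}}$. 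In the reduced Lebesgue picture from the first step this is precisely the strong-type hypothesis that triggers CZ extrapolation to weak type $(1,1)$.

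The third step, which I expect to be the most delicate, is to verify the size and Hörmander smoothness conditions for $\mathcal K(x,u)$ in the $F_\varrho$-norm. For the size estimate, since $\eta(x,u)$ and $e^{-R(u)}$ do not depend on $t$, inequality \eqref{var} gives
\[
\|\mathcal K(x,u)\|_{F_\varrho}\lesssim \eta(x,u)\,e^{-R(u)}\int_0^1 |\dot K_t(x,u)|\,dt;
\]
on the support of $\eta$ the local geometry forces $|u-D_tx|\simeq |x-u|$ for $0<t\le 1$, so \eqref{dotKeps} combined with the substitution $s=c|x-u|^2/t$ yields the expected bound $|x-u|^{-n}$ after the density ratio is absorbed by the ring normalization. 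For the Hörmander condition, differentiation in $x$ of $\dot K_t(x,u)$ produces factors at most $|x|+|u-D_tx|/t$; using \eqref{gradeta} for the contribution of $\nabla_x\eta$, these factors are absorbed by the Gaussian and the time integral converts the extra $|x|$ into the required $|x-u|^{-n-1}$ decay.

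The main obstacle will be precisely this sharp Hörmander estimate: the polynomial factors $|x|$ produced by $\nabla_x e^{R(x)}$ and by the Gaussian derivative must be carefully balanced against the effective distance $|x-u|$, invoking the local constraint $|x-u|_Q\gtrsim 1/(1+|x|_Q)$ from Lemma \ref{due-molt} in the singular region. The geometric machinery from Section \ref{t large} plays no role in this small-$t$ local regime, which is instead governed entirely by the Euclidean singularity of $\dot K_t$, and the passage to a Calder\'on--Zygmund setting on Lebesgue measure is exactly what makes \cite{Le Merdy} available as the base case and forces the restriction $\varrho>2$ to be inherited by the conclusion.
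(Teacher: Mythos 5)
Your overall plan — transfer to Lebesgue measure ring by ring, obtain the strong type from Le Merdy--Xu, then run a vector-valued Calder\'on--Zygmund argument for the kernel in $F_\varrho$ — is indeed the paper's strategy, and Steps~1 and 2 are essentially what the paper carries out in Proposition~\ref{prop81} and Proposition~\ref{prp2} (the paper works with the symmetrized operator $\widetilde V^{\text{loc}} g = e^{-R(x)}V^{\text{loc}}(g\,e^{R(\cdot)})$, whose kernel carries the factor $e^{-R(x)}$ rather than your $e^{-R(u)}$, but on $\mathrm{supp}\,\eta$ these differ by bounded factors; and the paper applies \cite{Le Merdy} directly to $V(gr_je^{R(\cdot)})$ without detouring through $V^{\text{glob}}$, though your detour would also work).

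Two substantive points in Step~3 are not correct, and both are precisely where the real work lies. First, for weak type $(1,1)$ the Calder\'on--Zygmund decomposition is performed in the integration variable, so the H\"ormander condition required is regularity in $u$, not in $x$: one needs
\[
\bigl\|\widetilde M^{\text{loc}}(x,u)-\widetilde M^{\text{loc}}(x,u')\bigr\|_{F_\varrho}\lesssim \frac{|u-u'|}{|x-u|^{n+1}}\qquad\text{for }|x-u|>2|u-u'|,
\]
as in Proposition~\ref{lemma-Calderon-no}(b). Differentiating in $x$ gives the \emph{dual} H\"ormander condition, relevant to $L^p$ for $p>2$, not to the weak $(1,1)$ endpoint. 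Your sentence about ``$\nabla_x e^{R(x)}$'' also suggests a bookkeeping slip: in the Lebesgue kernel $e^{-R(x)}K_t(x,u)\eta(x,u)$ the explicit $e^{\pm R(x)}$ factors cancel against those in the Mehler kernel \eqref{mehler}, so no polynomial factor arises from them. Second, your claim that ``on the support of $\eta$ the local geometry forces $|u-D_tx|\simeq|x-u|$ for $0<t\le1$'' is false: for $|x|$ large and $t$ of order one, $|D_tx-x|\simeq t|x|$, which can be vastly larger than $|x-u|\lesssim 1/(1+|x|)$, so no two-sided comparison holds. What the paper actually establishes (Proposition~\ref{lemma-integral_dt}, inequality~\eqref{occorre}) is the one-sided lower bound $|D_tx-u|^2/t \gtrsim |x-u|^2/t + t|x|^2 - C$, where the extra term $t|x|^2$ is exactly what absorbs the polynomial growth $|x|$ coming from $\dot K_t$ and from $\nabla\eta$. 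Its proof splits $(0,1]$ into $t\lesssim |x-u|/|x|$ (where the semigroup action on $x$ is negligible) and the complementary range (where the ring constraint $\bigl||x|_Q-|u|_Q\bigr|\lesssim 1/(1+|x|)$, i.e.\ the upper bound from the definition of $\eta$ rather than the lower bound of Lemma~\ref{due-molt} that you cite, is used). Without these two ingredients the H\"ormander estimate cannot be closed.
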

  The advantage of Theorem
\ref{locsmallt_v2} is that  the proof  can be based on vector-valued Calder\'on--Zygmund theory.
To apply this machinery, we will pass to Lebesgue measure,
and set
\begin{equation}\label{deftildeV:oper}
\widetilde V^{\text{loc}}\,g(x)=
 e^{-R(x)}\,
V^{\text{loc}}\big ( g(\cdot)\, e^{R(\cdot)}\,\big)(x).
\end{equation}
Lebesgue measure will be written either $dx$ or $du$.

The following proposition            
clarifies  the connection between
$\widetilde V^{\text{loc}}$ and $V^{\text{loc}}$.

\begin{proposition}\label{prop81}
Let $\varrho>2$. If
the operator that maps   $g \in L^1(du)$ to the function
  \begin{equation*}
 x\mapsto \left\|\widetilde V^{\rm{loc}}\,g(x)\right\|_{F_\varrho}, \quad x \in \mathbb R^n,
\end{equation*}
is of weak type $(1,1)$ with respect to   Lebesgue measure,
then the operator that maps   $f \in L^1(\gamma_\infty)$ to the function
  \begin{equation*}
x\mapsto   \|V^{\rm{loc}}f(x)\|_{F_\varrho}\,, \quad x \in \mathbb R^n,
   \end{equation*}
 is of   weak type $(1,1)$ with respect to  $\gamma_\infty$.
\end{proposition}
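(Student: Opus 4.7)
The plan is to transfer the hypothesized Lebesgue-measure weak-type bound for $\widetilde V^{\mathrm{loc}}$ to a $\gamma_\infty$-weak-type bound for $V^{\mathrm{loc}}$ ring by ring, exploiting the fact that the density $e^{-R}$ is essentially constant on each ring $R_k$. Unwinding the definition \eqref{deftildeV:oper} shows that whenever $g(u)=f(u)\,e^{-R(u)}$ one has $\widetilde V^{\mathrm{loc}} g(x)=e^{-R(x)}\,V^{\mathrm{loc}} f(x)$; taking $F_\varrho$-norms this bridge identity reads
\[
\|V^{\mathrm{loc}} f(x)\|_{F_\varrho} \,=\, e^{R(x)}\,\|\widetilde V^{\mathrm{loc}} g(x)\|_{F_\varrho}.
\]

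Next I will use the ring decomposition of Section~\ref{local1}. From \eqref{kernelHloc} and the support properties of $r_j$ and $\widetilde r_j$, for $x\in R_k$ the kernel $K_t(x,u)\,\eta(x,u)$ vanishes unless $u\in U_k:=\bigcup_{j=\max(0,k-3)}^{k+3} R_j$, a union of at most seven consecutive rings. By linearity of $V^{\mathrm{loc}}$ in $f$, this yields the crucial localization
\[
V^{\mathrm{loc}} f(x) \,=\, V^{\mathrm{loc}}(f\chi_{U_k})(x), \qquad x\in R_k.
\]

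Given this, fix $\alpha>0$ and $k\in\N$, and set $g_k(u)=f(u)\,\chi_{U_k}(u)\,e^{-R(u)}$, so that $\|g_k\|_{L^1(du)}\simeq \|f\chi_{U_k}\|_{L^1(\gamma_\infty)}$. Applying the bridge identity to $f\chi_{U_k}$ and invoking the localization gives $\widetilde V^{\mathrm{loc}} g_k(x) = e^{-R(x)}\,V^{\mathrm{loc}} f(x)$ on $R_k$. Since $e^{-R(x)}\ge e^{-(k+1)}$ there, the level set $\{x\in R_k : \|V^{\mathrm{loc}} f(x)\|_{F_\varrho}>\alpha\}$ is contained in $\{x : \|\widetilde V^{\mathrm{loc}} g_k(x)\|_{F_\varrho}>\alpha\,e^{-(k+1)}\}$. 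The assumed Lebesgue weak-$(1,1)$ estimate bounds the Lebesgue measure of this set by $Ce^{k+1}\|g_k\|_{L^1(du)}/\alpha$; passing back to $\gamma_\infty$ via $d\gamma_\infty\lesssim e^{-k}\,dx$ on $R_k$, the exponential factors cancel and we arrive at
\[
\gamma_\infty\!\left\{x\in R_k : \|V^{\mathrm{loc}} f(x)\|_{F_\varrho}>\alpha\right\} \,\lesssim\, \frac{\|f\chi_{U_k}\|_{L^1(\gamma_\infty)}}{\alpha}.
\]
Finally, summing over $k\ge 0$ and using that each $u\in\R^n$ belongs to at most seven of the sets $U_k$ yields the desired $\|f\|_{L^1(\gamma_\infty)}/\alpha$ bound.

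The only point demanding genuine care is the ring-localization identity in the second step, which relies on the carefully engineered supports of $(r_j,\widetilde r_j)$ in Section~\ref{local1} together with the integral representation \eqref{kernelHloc}. The remaining manipulations amount to tracking the exponential factors $e^{\pm k}$ and the normalizing constant of $\gamma_\infty$, and I do not foresee substantial obstacles beyond this bookkeeping.
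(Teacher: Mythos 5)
Your proof is correct and takes a genuinely different, and in fact leaner, route than the paper's. The published proof begins by expanding $V^{\mathrm{loc}}f = \sum_j \widetilde r_j\, V(fr_j)$, which re-introduces the \emph{unlocalized} variation operator $V$; this forces a split $V = V^{\mathrm{glob}} + V^{\mathrm{loc}}$, and the resulting term $\sum_j \widetilde r_j\, V^{\mathrm{glob}}(fr_j)$ is handled by invoking Theorem~\ref{t<1,global} (the weak type $(1,1)$ of the global part) as an extra external input. Only the remaining term $\sum_j \widetilde r_j\, V^{\mathrm{loc}}(fr_j)$ is then transferred to Lebesgue measure via $\widetilde V^{\mathrm{loc}}$ and the bounded overlap of the $\widetilde r_j$. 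Your argument dispenses with this detour: instead of decomposing the operator you decompose the \emph{level set} by rings $R_k$, observe from \eqref{kernelHloc} and the support constraints on $\eta$ that $V^{\mathrm{loc}} f(x) = V^{\mathrm{loc}}(f\chi_{U_k})(x)$ for $x\in R_k$, and then apply the bridge identity $\widetilde V^{\mathrm{loc}}(f\chi_{U_k}e^{-R}) = e^{-R}\,V^{\mathrm{loc}}(f\chi_{U_k})$ ring by ring, where both $e^{R(x)}$ and the density of $\gamma_\infty$ are comparable to constants. The exponential factors cancel exactly as you say, and the bounded overlap of the $U_k$ closes the sum. The payoff is that your proof uses only the hypothesis on $\widetilde V^{\mathrm{loc}}$ together with elementary ring bookkeeping, whereas the paper's proof also consumes Theorem~\ref{t<1,global}; both are valid, but yours exhibits Proposition~\ref{prop81} as a pure transference statement with no reliance on the global analysis.
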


\begin{proof}
  We have
   \begin{align*}                        
   \|V^{\text{loc}}f(x)\|_{F_\varrho}
   = &\, \left\| \sum_{j=0}^\infty \widetilde {r_j} (x)\, V (fr_j)(x)\right\|_{F_\varrho}
  \\  \le & \,\left\|\sum_{j=0}^\infty \widetilde r_j(x)\,V^{\text{glob}} (fr_j)(x)\right\|_{F_\varrho} +
  \left\|  \sum_{j=0}^\infty \widetilde r_j(x)\,V^{\text{loc}} (fr_j)(x)\right\|_{F_\varrho} = I_{\text{glob}} + I_{\text{loc}}.
   \end{align*}
 Since the $\widetilde {r_j}$ have supports with bounded overlap,
  the  sums here are uniformly locally finite. It follows that
    \begin{align*}
 \|  I_{\text{glob}} \|_{L^{1,\infty}(\gamma_\infty)} \lesssim  \sum_{j=0}^\infty \left\|\widetilde r_j(x)\,\|V^{\text{glob}} (fr_j)(x)\|_{F_\varrho}\, \right\|_{L^{1,\infty}(\gamma_\infty)}.
   \end{align*}
   The $F_\varrho$ norm here equals $\| \mathcal H_t^{\text{glob}} (fr_j)(x)\|_{v(\varrho), (0,1]}$. After estimating the factor $\widetilde r_j(x)$  by 1, we can apply Theorem
   \ref{t<1,global} and get
    \begin{align*}
 \|  I_{\text{glob}} \|_{L^{1,\infty}(\gamma_\infty)}  \lesssim   \sum_{j=0}^\infty \left\|fr_j \right\|_{L^{1}(\gamma_\infty)} \simeq  \|f\|_{L^{1}(\gamma_\infty)}.
   \end{align*}

   We  estimate  the $L^{1,\infty}(\gamma_\infty)$ quasinorm of $II$  similarly, but then apply the definition of
   $\widetilde V^{\text{loc}}$. This gives
    \begin{align*}
 \| I_{\text{loc}} \|_{L^{1,\infty}(\gamma_\infty)}  \lesssim  &   \sum_{j=0}^\infty  \left\| \widetilde r_j(x)\,\|V^{\text{loc}} (fr_j)(x)\|_{F_\varrho} \right\|_{L^{1,\infty}(\gamma_\infty)} \\
\simeq &
   \sum_{j=0}^\infty    \left\| \widetilde r_j(x)\,e^{R(x)}\left\|\widetilde V^{\text{loc}} (fr_je^{-R(\cdot)})(x)\right\|_{F_\varrho}  \right\|_{L^{1,\infty}(\gamma_\infty)}   \\
    \simeq  & \sum_{j=0}^\infty    \left\| \widetilde r_j(x)\,\left\|\widetilde V^{\text{loc}} (fr_je^{-R(\cdot)})(x)\right\|_{F_\varrho}  \right\|_{L^{1,\infty}(dx)};
      \end{align*}
  in the last step here, we  switched
    from $\gamma_\infty$ to Lebesgue measure. This  was possible because of \eqref{densityinring}. Next, we estimate  $\widetilde r_j(x)$ by 1 and use the hypothesis of the proposition, getting
    \begin{align*}
  \| I_{\text{loc}} \|_{L^{1,\infty}(\gamma_\infty)}  \lesssim    \sum_{j=0}^\infty \left\|fr_j e^{-R(\cdot)}\right\|_{L^{1}(du)} \simeq \|f\|_{L^{1}(\gamma_\infty)}.
   \end{align*}
   The proposition is proved.
   \end{proof}

\subsection{Vector-valued Calder\'on--Zygmund operators}\label{CalderonZygmund}
This theory will be applied to                               
the operator  $\widetilde V^{\text{loc}}$.  The following proposition gives the strong  $(p,p)$ bound needed.

\begin{proposition}\label{prp2}
 For any  $\varrho>2$ and any $1<p<\infty$, the operator that maps  $g \in L^p(du)$ to the function
 \begin{equation*}
x\mapsto   \left\|\widetilde V^{\text{loc}}\,g(x)\right\|_{F_\varrho}, \qquad x \in \mathbb R^n,
  \end{equation*}
 is bounded from $L^p(du)$ to $L^{p} (dx )$.
\end{proposition}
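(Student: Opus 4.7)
The strategy is to combine the Le Merdy--Xu $L^p(\gamma_\infty)$-bound for the full variation operator associated with $\mathcal H_t$ (Corollary~4.5 in \cite{Le Merdy}) with the ring decomposition defining $\mathcal H_t^{\mathrm{loc}}$, and then to transfer from $\gamma_\infty$ to Lebesgue measure ring by ring. This is possible because $e^{-R(\cdot)}\simeq e^{-j}$ on the support of each $\widetilde r_j$ and of each $r_j$, and these supports have bounded overlap.

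First, substituting $f:=g\,e^{R(\cdot)}$ in the identity $\mathcal H_t^{\mathrm{loc}}f = \sum_j \widetilde r_j\, \mathcal H_t(fr_j)$, applying the triangle inequality in the Banach space $F_\varrho$, and using \eqref{equivalence}, I obtain the pointwise control
\begin{equation*}
\big\|\widetilde V^{\mathrm{loc}} g(x)\big\|_{F_\varrho} \;\le\; \sum_{j} \Phi_j(x), \qquad
\Phi_j(x) \;:=\; \widetilde r_j(x)\, e^{-R(x)}\, \big\|\mathcal H_t f_j(x)\big\|_{v(\varrho),(0,1]},
\end{equation*}
with $f_j := (g\,e^{R(\cdot)})\,r_j$. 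Since the supports of the $\widetilde r_j$ have bounded overlap, at each $x$ only $O(1)$ of the $\Phi_j(x)$ are nonzero, so $\big(\sum_j \Phi_j(x)\big)^p \lesssim \sum_j \Phi_j(x)^p$.

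Next, I integrate $\Phi_j^p$ against $dx$. On $\mathrm{supp}\,\widetilde r_j$ one has $e^{-pR(x)}\simeq e^{-pj}$ and $dx \simeq e^{j}\,d\gamma_\infty$, so
\begin{equation*}
\int \Phi_j(x)^p\,dx \;\lesssim\; e^{-(p-1)j}\int \big\|\mathcal H_t f_j\big\|_{v(\varrho),(0,1]}^p\,d\gamma_\infty
\;\lesssim\; e^{-(p-1)j}\,\|f_j\|_{L^p(\gamma_\infty)}^p,
\end{equation*}
where the last step invokes the Le Merdy--Xu $L^p(\gamma_\infty)$-bound for the variation of $\mathcal H_t$ over $\mathbb R_+$, which dominates the variation over $(0,1]$. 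Since $e^{R(u)}\simeq e^{j}$ on $\mathrm{supp}\,r_j$, a direct calculation gives $\|f_j\|_{L^p(\gamma_\infty)}^p \simeq e^{(p-1)j}\int |g|^p\,r_j^p\,du$, so the two factors $e^{(p-1)j}$ cancel to yield $\int \Phi_j^p\,dx \lesssim \int |g|^p\,r_j^p\,du$.

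Finally, summing in $j$ and using the bounded overlap of the supports of the $r_j$ (which implies $\sum_j r_j^p \le \sum_j r_j = 1$), I arrive at
\begin{equation*}
\int \big\|\widetilde V^{\mathrm{loc}} g(x)\big\|_{F_\varrho}^p\,dx \;\lesssim\; \int |g(u)|^p\,du,
\end{equation*}
which is the desired bound. The principal delicate point is the bookkeeping of the exponential weights in the transfer between $\gamma_\infty$ and Lebesgue measure; the exact cancellation of the $e^{(p-1)j}$ factors is what the ring decomposition of Section~\ref{local1} is tailored to produce.
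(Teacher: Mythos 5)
Your proof is correct and follows essentially the same route as the paper: decompose via the bounded overlap of the $\widetilde r_j$, transfer between Lebesgue and Gaussian measure ring by ring using $e^{-R}\simeq e^{-j}$, apply the Le Merdy--Xu $L^p(\gamma_\infty)$ variational bound to each piece, and observe the cancellation of the $e^{(p-1)j}$ factors. The only presentational difference is that you carry out the weight transfer in two steps (once for $\Phi_j^p\,dx$ and once for $\|f_j\|_{L^p(\gamma_\infty)}^p$) rather than in one chain of $\simeq$'s, but the computation and the key ingredients are identical.
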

\begin{proof}

As a consequence of the definitions of $\widetilde V^{\text{loc}}$  and $ V^{\text{loc}}$ and the bounded overlap of the $\widetilde{r_j}$,
 one has
  \begin{align*}               
  \left\|\,   \left\|\widetilde V^{\text{loc}}\,g(x)\right\|_{F_\varrho}\,\right\|_{L^p(dx)}^p
 & =
 \int   \left\|
 \sum_{j=0}^\infty
 \widetilde r_j (x) \,e^{-R(x)}\,
V\left(
g \,r_j\, e^{R(\cdot)} \right)(x)\right\|_{F_\varrho}^p \,dx\\
&\lesssim
  \sum_{j=0}^\infty
\int     \Big\| \widetilde r_j (x) e^{-R(x)}\,
 V\left ( g\, r_j \,e^{R(\cdot) }\right) (x)
\Big\|_{F_\varrho}^p\, dx\\
&\simeq \sum_{j=0}^\infty  e^{-jp} \int
  \Big\|
 \widetilde{r}_j (x)\,
V\left ( g\, r_j \,e^{R(\cdot) }\right) (x)
\Big\|_{F_\varrho}^p \,dx\\
&\simeq
 \sum_{j=0}^\infty  e^{j(1-p)}  \int
  \Big\|
 \widetilde{r}_j (x)\,
V\left ( g\, r_j \,e^{R(\cdot) }\right) (x)
\Big\|_{F_\varrho}^p \,d\gamma_\infty(x).
\end{align*}
Notice that  we also used \eqref{densityinring} here.
 In the last expression, we delete the factor $\widetilde r_j (x)$ and observe that the resulting $F_\varrho$ seminorm  equals
$ \left\|\mathcal H_t \left ( g\, r_j \,e^{R(\cdot) }\right) (x)\right\|_{v(\varrho), (0,1]}$.

As mentioned in the introduction, the variation operator for  $\mathcal H_t$ is  bounded on $L^p(\gamma_\infty)$ with    $1<p<\infty$, for   $\varrho > 2$ \cite{Le Merdy}. We therefore obtain
   \begin{align*}
   \left\|\,   \left\|\widetilde V^{\text{loc}}\,g(x)\right\|_{F_\varrho}\,\right\|_{L^p(dx)}^p
 & \lesssim   \sum_{j=0}^\infty  e^{j(1-p)}  \int
  \left\|\mathcal H_t \left( g\, r_j \,e^{R(\cdot) }\right) (x)
  \right\|_{v(\varrho), (0,1]}^p d\gamma_\infty(x)
    \\ & \lesssim
   \sum_{j=0}^\infty  e^{j(1-p)}\left\| g\, r_j \,e^{R(\cdot)} \right\|_{L^p(\gamma_\infty)}^p
  \simeq   \sum_{j=0}^\infty  \| g\, r_j \, \|_{L^p(du)}^p  \le  \big\|g\big\|^p_{L^p(du)},
  \end{align*}
  proving the assertion.
  \end{proof}

\vskip5pt

We shall now prove the appropriate standard estimates for the vector-valued kernel  of
$\widetilde V^{\text{loc}}$.

From    \eqref{Vloc} and  \eqref{kernelHloc}, we see that
$V^{\text{loc}}$  is an integral operator with an  $F$-valued
 kernel  $ M^{\text{loc}} (x,u)$  given by
 \begin{equation*}                               
 M^{\text{loc}}(x,u) (i,\succvarepsilon) =\big(
K_{\varepsilon_{i}}(x,u)-K_{\varepsilon_{i-1}} (x,u)\big)\, \eta(x,u), \qquad
i = 1,\dots,N(\succvarepsilon).
\end{equation*}
 This means that  for  $f \in L^1(\gamma_\infty)$
     \begin{equation}\label{formula1_Bochner}
V^{\text{loc}} f(x)=\int  M^{\text{loc}} (x,u) f(u)\,d\gamma_\infty (u).
\end{equation}
We will need the
 kernel $\widetilde M^{\rm{loc}} (x,u)$ of $\widetilde V^{\text{loc}}$, for integration against Lebesgue measure in the sense that
 \begin{equation}\label{formula2_Bochner}
\widetilde V^{\text{loc}}\,f(x)=
\int \widetilde M^{\rm{loc}}(x,u)\,f(u)\,du
\end{equation}
for suitable $f$.

\begin{remark}
Notice that formulae \eqref{formula1_Bochner} and \eqref{formula2_Bochner} make sense  in  $F$ for all $x\in\R^n$, since the integrals may be computed
coordinatewise, that is, at one $(i,\epsilon) $ at a time. 
In the Appendix (see Section \ref{Appendix}) it will be shown that
for   $x\notin {\text{supp}}\,f$ 
the integral in  \eqref{formula2_Bochner} is an  $F_\varrho$-Bochner integral, and thus 
$\widetilde V^{\text{loc}}\,f(x) \in F_\varrho$.
\end{remark}

From \eqref{deftildeV:oper} we get
\begin{multline}\label{def:Pcalli}
\widetilde M^{\rm{loc}} (x,u)(i,\succvarepsilon)=
 e^{-R(x)}\,
 M^{{\rm{loc}}} (x,u)(i,\succvarepsilon)
  = e^{-R(x)}\,
\big(K_{\varepsilon_{i+1}}(x,u)-K_{\varepsilon_{i}}(x,u)\big)\,\eta(x,u), \\ \qquad\qquad
i = 1,\dots,N(\succvarepsilon).
\end{multline}
In analogy with \eqref{equivalence}, this implies
\begin{align} \label{equiv}
\|\widetilde M^{\rm{loc}}(x,y) \|_{F_\varrho}
&=\,e^{-R(x)}\,
\|K_t(x,u) \,\eta(x,u)\|_{v(\varrho), (0,1]}\notag
 \\&= e^{-R(x)} \,\eta(x,u)\,
\|K_t(x,u)\|_{v(\varrho), (0,1]},
\end{align}
where the variation is meant with respect to $t$.

We will need an auxiliary result.
\begin{proposition}                                            
\label{lemma-integral_dt}
Let $p,\: r\ge 0$ with $p+ r/2 > 1 $. Assume that $\eta(x,u)\neq 0$  and  $x\neq u$. Then for $\delta>0$
\begin{equation*}                 
\int_0^{1}
t^{-p}   \exp\left(-\delta\,\frac{|u- D_t \,x |^2}t\right) |x|^{r} \,  dt\lesssim C\,{|u-x|^{-2p-r+2}}.
\end{equation*}
Here the constant $C$ may depend on  $\delta,\: p$ and   $r$, in addition to   $n$, $Q$ and  $B$.
\end{proposition}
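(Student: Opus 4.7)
The plan is to reduce the claim to the case $r = 0$, and then to partition $(0,1]$ into three subintervals determined by the ratios of $t$, $d := |u-x|$ and $|x|$. Throughout I use the two-sided estimate
\[
|D_t x - x| \simeq t\,|x|, \qquad t \in (0,1],
\]
which holds because $(D_t - I)/t$ is a continuous matrix-valued function on $[0,1]$ taking values in invertible matrices (its value at $t=0$ is $-Q_\infty B^* Q_\infty^{-1}$, invertible since the spectrum of $B$ lies in the open left half-plane).

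\emph{Reduction to $r = 0$.} The triangle inequality $|x| \lesssim (|u - D_t x| + d)/t$, together with the absorption estimate $y^r e^{-\delta y^2/t} \le C_r\, t^{r/2}\, e^{-\delta' y^2/t}$ valid for any $\delta' < \delta$, gives
\[
t^{-p}|x|^r e^{-\delta|u-D_t x|^2/t} \;\lesssim\; t^{-(p+r/2)}\, e^{-\delta' |u-D_t x|^2/t} \;+\; d^{\,r}\, t^{-(p+r)}\, e^{-\delta|u-D_t x|^2/t}.
\]
Under the hypothesis $p + r/2 > 1$ (hence also $p + r > 1$), it therefore suffices to establish the model estimate
\[
\int_0^1 t^{-q}\, e^{-c|u-D_t x|^2/t}\, dt \;\lesssim\; d^{\,2-2q}, \qquad q > 1;
\]
applying this with $q = p + r/2$ and with $q = p + r$ and adding the two bounds yields the required $d^{\,2-2p-r}$.

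\emph{The model estimate.} Split at $T_a = d/(2C_1|x|)$ and $T_b = 2d/(c_1|x|)$, with $c_1, C_1$ from the two-sided control of $|D_t x - x|$. On $(0, T_a]$ one has $|u-D_t x| \ge d/2$, and the change of variables $s = cd^2/(4t)$ identifies the integral with $d^{\,2-2q}\int_{cd|x|}^\infty s^{q-2} e^{-s}\, ds$, which is $\lesssim d^{\,2-2q}$ for $q > 1$. Symmetrically, on $[T_b, 1]$ one has $|u - D_t x| \ge c_1 t|x|/2$, and the substitution $\tau = c\,t|x|^2$ gives the same bound.

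\emph{The hard part: the middle interval $[T_a, T_b]$.} The trivial bound $\exp \le 1$ only produces $T_a^{1-q} \simeq (d|x|)^{q-1} d^{\,2-2q}$, which is acceptable when $d|x| \lesssim 1$ but not in general. To handle $d|x| \gg 1$ I invoke the monotonicity of $t \mapsto R(D_t x)$: differentiating and using the Lyapunov identity $BQ_\infty + Q_\infty B^* = -Q$ (satisfied by the covariance of the invariant measure) gives
\[
\frac{d}{dt} R(D_t x) = \tfrac12 \big\langle Q_\infty^{-1} Q\, Q_\infty^{-1} D_t x,\, D_t x \big\rangle \;\ge\; c_0 |x|^2.
\]
Writing $t^* \in [T_a, T_b]$ for the minimiser of $|u - D_t x|$ on this interval and $d_* := |u - D_{t^*}x|$, this yields
\[
R(D_{t^*}x) - R(x) \;\gtrsim\; t^*|x|^2 \;\simeq\; d|x|, \qquad \big|R(D_{t^*}x) - R(u)\big| \;\lesssim\; d_*|x|.
\]
Combining with the ring bound $|R(u) - R(x)| \le C_2$ forced by $\eta(x,u) \neq 0$ produces the dichotomy $d|x| \lesssim 1 + d_*|x|$. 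Hence either $d|x| \lesssim 1$ and the trivial bound already suffices, or $d_* \gtrsim d$, in which case $|u - D_t x|^2/t \ge d_*^2/T_b \gtrsim d|x|$ uniformly on $[T_a, T_b]$, producing an exponential factor $e^{-c'd|x|}$ that absorbs the polynomial loss $(d|x|)^{q-1}$. The main obstacle is precisely this middle regime: the Lyapunov-based monotonicity of $R(D_t x)$ is what ultimately prevents the orbit $t\mapsto D_t x$ from approaching $u$ too closely whenever $d|x|$ is large.
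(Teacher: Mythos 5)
Your proof is correct and rests on the same two key facts as the paper's: the strict monotonicity $\frac{d}{dt}R(D_tx)\simeq|x|^2$ (which you derive from the Lyapunov identity, and which the paper imports from Lemma 4.1 of \cite{CCS2}), and the radial constraint $|R(u)-R(x)|\lesssim 1$ forced by $\eta(x,u)\neq 0$. The paper packages these into the single pointwise lower bound $|D_tx-u|^2/t\geq c\,|x-u|^2/t+c\,t|x|^2-C$ for $0<t<1$ and then defers the resulting integral to Lemma 8.1 of \cite{CCS3}, whereas you split $(0,1]$ into three regimes (with a minimiser argument on the middle one) and compute the integrals directly; the route is essentially the same, carried out self-containedly.
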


\begin{proof}
 The statement of this proposition is similar to that of  \cite[Proposition 8.3]{CCS5}, which is based on
\cite[Lemma 8.1]{CCS3}. But our function $\eta$ is not the same as there, and we must verify that
for  $\eta(x,u)\neq 0$ and  $0<t<1$
\begin{equation}\label{occorre}
  \frac{|D_t\,x - u|^2}{t} > c \,\frac{|x - u|^2}{t} +c t |x|^2-C.
\end{equation}
Once this is established, we can follow the argument for \cite[Lemma 8.1]{CCS3} and finish the proof.

 To verify \eqref{occorre}, assume first that
 $t < c_0 |x-u|/|x|$ for a small $c_0 >0$ to be chosen. Then \cite[Lemma~2.3]{CCS3} implies
$|D_t\,x - x| \le C t |x| < Cc_0 |x-u| $ and thus
\begin{equation*}
  |D_t\,x - u| \ge |x - u| -  |D_t\,x - x| > |x - u| - Cc_0 |x-u|.
\end{equation*}
So with $c_0 >0$ small enough we get
\begin{equation*}
  |D_t\,x - u| \, \ge \frac12
  \, |x - u| \,
  \gtrsim\, |x - u| +t |x|.
\end{equation*}
To obtain \eqref{occorre} in this case, take squares and divide by $t$.

In the opposite case $c_0 |x-u|/|x| \le t < 1$, we use \cite[Lemma 4.1]{CCS2} to conclude that
\begin{equation*}
\frac  \partial{\partial t}\, | D_t\,x|_Q = \frac  \partial{\partial t}\, \sqrt{2R( D_t\,x)}
\simeq | D_t\,x|_Q  \simeq | x|_Q.
\end{equation*}
Integration yields
\begin{equation*}
| D_t\,x|_Q  - | x|_Q  \simeq t | x|_Q\,
\gtrsim\, t | x|_Q + |x-u|.
\end{equation*}
Thus
\begin{align}
|D_t\,x - u|_Q &\ge | D_t\,x|_Q  - | u|_Q  = | D_t\,x|_Q  - | x|_Q  +  | x|_Q -  | u|_Q\notag
\\
&\ge ct | x|_Q + c|x-u|  - \big|| x|_Q  -  | u|_Q\big|.\label{nuovo}
\end{align}
Since  $\eta(x,u) > 0$,  there exists a  $j \in \mathbb N $ such that  such that $\widetilde r_j (x) > 0$ and  $ r_j (u) > 0$.
  Thus   $\big|| x|_Q  -  | u|_Q\big|
  \lesssim   1/\left(1+\sqrt j\right)  \simeq  \frac 1{1 +|x|}$,
 and by squaring \eqref{nuovo} we get
\begin{align*}
|D_t\,x - u|_Q^2 \ge &\, ct^2 | x|_Q^2 + c|x-u|^2 - C(t | x|_Q +|x-u|) /(1+|x|)\\
 \ge &\, ct^2 | x|_Q^2 +c|x-u|^2 - Ct.                                                  
\end{align*}
This implies \eqref{occorre}.
\end{proof}

\vskip7pt

\begin{proposition}\label{lemma-Calderon-no}
(\rm a)
For all $(x,u)$ such that $x\neq u$            
  $$
\|\widetilde M^{\rm{loc}}(x,u) \|_{F_\varrho}\lesssim \frac 1{|x-u|^{n}} .
$$
  \medskip

(\rm b)
If the points $x,\:u$  and $u'$ satisfy $|x-u| > 2  |u-u'|$, then
 $$
 \| \widetilde M^{\rm{loc}}(x,u) - \widetilde M^{\rm{loc}}(x,u')\|_{F_\varrho}
  \lesssim \frac{|u-u'|}{ |x-u|^{n+1}}.
  $$

 
The implicit constants in this proposition depend only on $n$, $B$ and $Q$.
\end{proposition}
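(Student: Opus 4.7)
The plan is to derive both (a) and (b) from integral estimates in $t$, via the monotonicity $\|\cdot\|_{v(\varrho)}\le\|\cdot\|_{v(1)}$ for $\varrho\ge 1$ combined with \eqref{var}. Together with \eqref{equiv}, this reduces (a) to showing $\int_0^1|\dot K_t(x,u)|\,dt\lesssim e^{R(x)}|x-u|^{-n}$ (on the support of $\eta$, where the assertion is non-trivial). I would obtain this by inserting \eqref{dotKeps} and applying Proposition \ref{lemma-integral_dt} once with $(p,r)=(n/2+1,0)$ for the $t^{-1}$ piece and once with $(p,r)=(n/2+1/2,1)$ for the $|x|\,t^{-1/2}$ piece; each produces exponent $-2p-r+2=-n$, as required, and (a) then follows from $\eta\le 1$.

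For (b), the same reduction bounds the left-hand side by $e^{-R(x)}\int_0^1|\dot K_t(x,u)\eta(x,u)-\dot K_t(x,u')\eta(x,u')|\,dt$. I would add and subtract $\dot K_t(x,u')\eta(x,u)$ to split this into
$$
(A)=e^{-R(x)}\int_0^1\eta(x,u)\,|\dot K_t(x,u)-\dot K_t(x,u')|\,dt,
$$
$$
(B)=e^{-R(x)}\int_0^1|\dot K_t(x,u')|\,|\eta(x,u)-\eta(x,u')|\,dt.
$$
For $(A)$ I would use the mean value theorem in $u$ together with a gradient estimate for $\dot K_t$ obtained by differentiating the Gaussian form \eqref{mehler}: this produces an extra factor at most $|v-D_tx|/t+t^{-1/2}$ beyond \eqref{dotKeps}, the first piece of which absorbs into the Gaussian via $|y|e^{-cy^2}\lesssim e^{-c'y^2}$. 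A further application of Proposition \ref{lemma-integral_dt} with $(p,r)=(n/2+3/2,0)$ and $(p,r)=(n/2+1,1)$ then gives $(A)\lesssim|u-u'|\,|x-u|^{-n-1}$, using $|x-v|\simeq|x-u|\simeq|x-u'|$ for $v$ on the segment between $u$ and $u'$ (which follows from $|u-u'|<|x-u|/2$).

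The main obstacle is $(B)$. From \eqref{gradeta} one has $|\eta(x,u)-\eta(x,u')|\lesssim(1+|x|)|u-u'|$, so the task reduces to showing $(1+|x|)\int_0^1|\dot K_t(x,u')|\,dt\lesssim e^{R(x)}|x-u'|^{-n-1}$. A naive application of \eqref{dotKeps} and Proposition \ref{lemma-integral_dt} handles the contributions where the factor $|x|$ of $1+|x|$ provides the missing power, but leaves a spurious term of order $|x-u'|^{-n}\,e^{R(x)}$ coming from the $1\cdot t^{-1}$ piece. To absorb this, I would revisit the refined inequality \eqref{occorre} proved inside Proposition \ref{lemma-integral_dt}: applying AM-GM to $|x-u'|^2/t+t|x|^2\ge 2|x-u'|\,|x|$ yields
$$
\exp\!\Bigl(-c\,\frac{|u'-D_tx|^2}{t}\Bigr)\lesssim \exp\!\Bigl(-c\,\frac{|x-u'|^2}{2t}\Bigr)\exp(-c'|x-u'|\,|x|).
$$
Combined with $(1+|x|)\exp(-c'|x-u'|\,|x|)\lesssim 1+|x-u'|^{-1}$ (via $y\,e^{-y}\lesssim 1$), this converts the excess $|x-u'|^{-n}$ into $|x-u'|^{-n-1}$. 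When $|x|\lesssim 1$, the geometry of the rings forces $|u'|\lesssim 1$ whenever $\eta(x,u')\ne 0$, so $|x-u'|\lesssim 1$ and $|x-u'|^{-n}\lesssim|x-u'|^{-n-1}$ holds trivially. Adding $(A)$ and $(B)$ yields (b).
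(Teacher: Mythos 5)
Your proposal is correct and follows essentially the same route as the paper: reduce to $t$-integrals of $\dot K_t$ and its $u$-gradient via \eqref{var}, bound these pointwise with \eqref{dotKeps} and its differentiated analogue, and convert to powers of $|x-u|$ through Proposition~\ref{lemma-integral_dt}; part (b) is handled with the same two-term split into a $K$-difference term and an $\eta$-difference term, and the $K$-difference is estimated via the mean value theorem and the mixed derivative $\partial_t\nabla_2 K_t$. One small comparison worth noting: for the $\eta$-difference term (your $(B)$, the paper's I), the AM--GM manipulation of the Gaussian is more work than needed — when $|x|>1$ one simply bounds $1+|x|\lesssim|x|$ and feeds the extra power of $|x|$ directly into Proposition~\ref{lemma-integral_dt} via the parameter $r$, while for $|x|\le 1$ the support of $\eta$ forces $|x-u'|\lesssim 1$; your AM--GM step ultimately reduces to the same case distinction and does not remove the need for the ring-geometry bound in the small-$|x|$ regime, so it is a detour rather than a genuine simplification.
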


\begin{proof}
  \hskip4pt  (a) \hskip4pt
Starting from \eqref{equiv}, we use   \eqref{var} and then \eqref{dotKeps} to get
\begin{align*}                   
  \|\widetilde M^{\rm{loc}}(x,u) \|_{F_\varrho}
& =     e^{-R(x)}\, \eta(x,u)\, \| \ K_t(x,u)\|_{v(\varrho), (0,1]}
 \le e^{-R(x)}\, \int_0^1 \big| \dot K_t(x,u)\big|\,dt \\
 & \lesssim
 \int_0^1 t^{-\frac n2} \exp\left(- c\, \frac{|u-D_t\,x |^2}t\right)\,
\left(\frac{1}{t}
+\frac{|x|}{\sqrt t}
\right)\, dt.
\end{align*}

Since  $\eta(x,u)\neq 0$,
Proposition
\ref{lemma-integral_dt}
 tells us that the last  integral  is bounded
 by $ C |u-x|^{-n}$,
and (a) follows.

\medskip

\noindent (b) \hskip4pt
We start almost as in (a), and find
 \begin{align*}
 \| \widetilde M^{\rm{loc}}(x,u) - \widetilde M^{\rm{loc}}(x,u')\|_{F_\varrho} & =
    e^{-R(x)}\,  \| \eta(x,u)\, K_t(x,u) - \eta(x,u')\, K_t(x,u')\|_{v(\varrho), (0,1]} \\
   & \le   e^{-R(x)}\,  |\eta(x,u) - \eta(x,u')| \,\|\ K_t(x,u)\|_{v(\varrho), (0,1]} \\& +
   e^{-R(x)}\,    \eta(x,u') \,\| \ K_t(x,u) - K_t(x,u')\|_{v(\varrho), (0,1]} =: \mathrm{I} + \mathrm{II}.
  \end{align*}
 
 To estimate the term I we use  \eqref{gradeta} to get $|\eta(x,u) - \eta(x,u')| \lesssim (1+|x|)\,|u-u'| $ and
  estimate $\| K_t(x,u)\|_{v(\varrho), (0,1]}$ precisely as in (a). If $|x| > 1$,  Proposition
\ref{lemma-integral_dt} then leads to 
 \begin{align} \label{termI}
|I|  \lesssim |u-u'|\,|u-x|^{-n-1}.
 \end{align}
 In the opposite case $|x| \le 1$, we get 
$|\mathrm{I}|  \lesssim |u-u'|\,|u-x|^{-n}$ in the same way. But in that case $|u-x| \lesssim 1$ since $\eta(x,u)$ or $\eta(x,u')$ is nonzero, so \eqref{termI} follows again.

 For II  we estimate $\eta(x,u')$ by 1 and let  $w = u-u'$.   We can apply  \eqref{var} to write 
 \begin{align}\label{termII}
 \| \ K_t(x,u) - K_t(x,u')\|_{v(\varrho), (0,1]} &\le  \int_0^1 \big| \dot K_t(x,u) - \dot K_t(x,u')\big|\,dt \\
& \le  \int_0^1  \left|\int_0^1  \langle \nabla_2\dot K_t(x,u'+sw),w \rangle\,ds\right|\,dt,
  \end{align}
  where $\nabla_2$ denotes the gradient with respect to the second variable.
  We need an estimate of the second derivative  $\nabla_2\dot K_t$\,, to begin with at the point $(x,u)$.
  
  Let for simplicity $\ell \in \{1,\dots,n\}$.
   From \cite[Lemma 4.1]{CCS3} we have
\begin{align*}
\partial_{u_\ell} K_t(x,u)   =
- K_t(x,u) \,R_\ell(t,x,u),\qquad t>0,
\end{align*}
where
\begin{align*}
R_\ell(t,x,u) = \left\langle Q_t^{-1} e^{tB} \, ( D_{-t}\, u - x), \,e_\ell\right\rangle.
\end{align*}
Further
\begin{equation} \label{Rell}
|R_\ell(t,x,u) |\lesssim
 \frac{|D_{-t}\,u-x|}t \simeq
 \frac{|u-D_t\, x|}t,
\end{equation}
since $t\in (0,1)$,
 and                  
\begin{align*}
 & \dot R_\ell(t,x,u) =
  {{-\left\langle Q_t^{-1}\, e^{tB}\, Q\, e^{tB^*}\, Q_t^{-1} e^{tB} \, ( D_{-t}\, u - x), \,e_\ell\right\rangle}}\\
  &\qquad\qquad+{{\left\langle Q_t^{-1} Be^{tB} \, ( D_{-t}\, u - x), \,e_\ell\right\rangle}}+{{\left\langle Q_t^{-1} e^{tB} \, Q_\infty\, B^* \, Q_\infty^{-1}\,  D_{-t}\, u , \,e_\ell\right\rangle}}.
\end{align*}
In the last term here, we write $D_{-t}\, u = (D_{-t}\, u - x) +x$, and it  follows that
\begin{align} \label{dotRell}
| &\dot R_\ell(t,x,u)|  \lesssim
\frac{| u-D_t\,x|}{t^2}+\frac{| D_{-t}\, u  -x|+|x|}{t}\lesssim \frac{| u-D_t\,x|}{t^2}+\frac{|x|}t.
  \end{align}
  
  Clearly,
 \begin{align*}
 \partial_t\,\partial_{u_\ell} K_t(x,u) &  =  \dot K_t(x,u)\,R_\ell(t,x,u) +
 K_t(x,u) \, \dot R_\ell(t,x,u),
\end{align*}
 and to estimate the two terms here, we combine  \eqref{dotKeps} with \eqref{Rell} and 
  \eqref{litet} with  \eqref{dotRell}. As a result,
  \begin{align*}
 |\partial_t\,\partial_{u_\ell} K_t(x,u)| & \lesssim e^{-R(x)}\, t^{-\frac n2} \exp\left(- c\, \frac{|u-D_t\,x|^2}t\right)\,
 \left(\frac{1}{t}+\frac{|x|}{\sqrt t}\right)\, \frac{|u-D_t\, x|}t \\ &+
  e^{-R(x)}\, t^{-\frac n2} \exp\left(- c\, \frac{|u-D_t\,x|^2}t\right)\,
 \left( \frac{| u-D_t\,x|}{t^2}+\frac{|x|}t\right)
\end{align*}
  
   By reducing the constants $c$ in the factors
$\exp\left(- c\, {|u - D_t\,x|^2}/t\right)$ ,
 we can eliminate factors ${|D_t\,x -u|}/\sqrt t$ and get
  \begin{align*}
 |\partial_t\,\partial_{u_\ell} K_t(x,u)| & \lesssim e^{-R(x)}\, t^{-\frac n2} 
 \exp\left(- c\, \frac{|u-D_t\,x|^2}t\right)\,
 \left(\frac{1}{t^{3/2}}+\frac{|x|}{ t}\right).
\end{align*}

  Proposition \ref{lemma-integral_dt} now implies
 \begin{align}\label{dtdu}
\int_{0}^{1} |\partial_t\,\partial_{u_\ell} K_t(x,u)| \,dt \lesssim |u-x|^{-n-1}.
\end{align}
  The next step is to observe that the arguments leading to this estimate remain valid if $u$ is replaced by
  $u'+sw$ for any $s \in [0,1]$, in the left-hand side. For the right-hand side this replacement does not change the order of magnitude, so we can keep $|u-x|$ there.
  
  Swapping the order of integration in \eqref{termII} and observing that $|w| = |u-u'|$, we  conclude from this that
  \begin{equation*}
    |\mathrm{II}| \lesssim \int_{0}^{1}  \frac{|u-u'|}{ |x-u|^{n+1}}\,ds = \frac{|u-u'|}{ |x-u|^{n+1}}.
  \end{equation*}
  
Together with \eqref{termI}, this estimate  completes the proof of (b) and that of Proposition~\ref{lemma-Calderon-no}.
    \end{proof}
  
 \vskip30pt


We now combine Propositions  \ref{prp2} and   \ref{lemma-Calderon-no} with  the Bochner integral representation in
 \eqref{formula2_Bochner} for  $x\notin \mathrm{supp}\, f$.  This is all we  need to apply the standard Calderón-Zygmund argument with values in $F_{\varrho}$, to obtain the weak type (1,1) estimate in  Theorem \ref{locsmallt_v2}.
  Theorem   \ref{locsmallt} also follows.

  Finally, Theorem \ref{thm} follows from
Theorems  \ref{t>1}, \ref{t<1,global} and  \ref{locsmallt}.

\section{A counterexample}\label{counterexample}

\begin{theorem}\label{thm_counterex}
The variation operator
  \begin{equation*}
    f \mapsto \|\mathcal{H}_t f (x)\|_{v(2),\R_+}, \qquad x\in \R^n,
  \end{equation*}
  is not of strong nor weak type $(p,p)$ with respect to $\gamma_\infty$,  for any $p \in [1,\infty)$.
\end{theorem}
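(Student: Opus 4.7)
The strategy is to show that for each $p\in[1,\infty)$, the variation operator is unbounded by exhibiting a function $f\in L^p(\gamma_\infty)$ whose image $\|\mathcal{H}_t f(x)\|_{v(2),\R_+}$ fails to lie in $L^{p,\infty}(\gamma_\infty)$. By Theorems \ref{t>1} and \ref{t<1,global}, which give weak $(1,1)$ bounds (and hence pointwise finiteness) for the variation on $[1,\infty)$ and for the global part on $(0,1]$, the pathology must be concentrated in the local part at small times.

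First I would use the elementary lower bound
\[
\|\mathcal{H}_t f(x)\|_{v(2),\R_+}^2\;\ge\;\sum_{k\ge 1}\bigl|\mathcal{H}_{t_k}f(x)-\mathcal{H}_{t_{k-1}}f(x)\bigr|^2
\]
at a lacunary sequence $t_k\downarrow 0$ in $(0,1]$, reducing the problem to controlling a discrete square function. Next I would construct a test function of the form $f=\sum_{k\ge 1} a_k\,\chi_{A_k}$, where $A_k$ is a small set placed near the trajectory point $D_{t_k} x_0$ for a fixed base point $x_0$, of radius matched to the Mehler Gaussian width $\sqrt{t_k}$, so that the Mehler kernel $K_{t_k}(x_0,\cdot)$ effectively ``sees'' only $A_k$ at time $t_k$. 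The amplitudes $a_k$ and the radii would be calibrated so that $f$ has finite $L^p(\gamma_\infty)$ norm while each $k$-th jump at $x_0$ contributes essentially $a_k$; the Mehler-kernel estimates \eqref{litet} and \eqref{dotKeps} from Section \ref{preliminaries} are the quantitative tools. Arranging $\sum_k a_k^2=+\infty$ would then force $\|\mathcal{H}_t f(x_0)\|_{v(2),\R_+}=+\infty$, and a stability argument in $x$ extends this divergence to a set of positive $\gamma_\infty$-measure, either by continuity near $x_0$ or by superposing constructions based at many well-separated points.

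The main obstacle is the interplay between three competing requirements: (i) the $A_k$ must contribute essentially isolated peaks at time $t_k$, and hence be well separated compared to the kernel width $\sqrt{t_k}$; (ii) since consecutive centers $D_{t_k}x_0$ and $D_{t_{k-1}}x_0$ differ by only $\simeq t_{k-1}$, which is smaller than $\sqrt{t_k}$ for modest lacunary choices such as $t_k=2^{-k}$, the bumps tend to interfere; and (iii) the set on which $v(2)(f)$ is large must have positive $\gamma_\infty$-measure, but this measure typically shrinks when $(t_k)$ decays too fast. Balancing these constraints is the technical heart of the counterexample, and it may require either a geometric placement of $A_k$ transverse to the trajectory, a subtle use of signed amplitudes together with a near-orthogonality argument, or the superposition device across a large family of base points to produce the required exceptional set.
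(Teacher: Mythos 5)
Your plan diverges fundamentally from the paper's proof, and I believe it has a genuine gap that your proposed workarounds do not close. You aim to produce a single $f\in L^p(\gamma_\infty)$ for which $\|\mathcal H_t f(x)\|_{v(2),\R_+}=+\infty$ on a set of positive measure, by stacking bumps $a_k\chi_{A_k}$ with $\sum a_k^2=\infty$. As you yourself note in point (ii), the bumps interfere: since $D_{t_k}x_0\to x_0$ and the Gaussian kernel at time $t_j$ has width $\sqrt{t_j}$, the kernel $K_{t_j}(x_0,\cdot)$ sees \emph{all} bumps $A_k$ with $k\ge j$, not just $A_j$; a short computation with $t_k=2^{-k}$ and $a_k\equiv 1$ shows that $\mathcal H_{t_j}f(x_0)$ is $\simeq 1$ for all $j$, so the individual jumps do not have size $\simeq a_j$ and the discrete square function may well be bounded. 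Your suggested remedies (transverse placement, signed amplitudes with near-orthogonality, superposition over base points) are exactly the hard part, and you leave them unexecuted. In fact ``signed amplitudes with near-orthogonality'' is implicitly pointing toward independent random signs, i.e.\ Rademacher functions — but once you go there, the lower bound you can hope for is not $+\infty$; it is only a \emph{logarithmic} excess, which is the actual size of the failure at $\varrho=2$.

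The paper's proof takes this probabilistic route explicitly and resolves the size question via an external theorem that your plan is missing. It first reduces, on a compact set and for small $t$, to a pure Gaussian convolution $\mathcal H^{\mathrm c}_t$ (Proposition \ref{lemmaIIt}), diagonalizes to a tensor product of one-dimensional heat kernels, and chooses as test function $g_N=\sum_{k\in\mathcal I_N}q_k$ a sum of $N$ Rademacher functions, so $\|g_N\|_{L^p}\simeq\sqrt N$ by Khinchine. Restricting $t$ to the dyadic sequence $2^{-2\ell}$, it replaces the heat average by the dyadic mean-value operator (Lemma \ref{AD}, following Jones--Wang), passes to the torus, and then replaces the mean value by the dyadic conditional expectation $E_\ell$ (Jones--Rosenblatt). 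At this point $E_\ell(T_N)=\sum_{k\le\ell}r_k$ is a simple random walk, and Qian's theorem \cite{Qian} gives that its $v(2)$-variation exceeds $c\sqrt{N\log\log N}$ on a set of measure $\to 1$. Since $\sqrt{N\log\log N}/\sqrt N\to\infty$, this ratio blowup, not a single divergent $f$, kills both strong and weak type $(p,p)$ for every $p\in[1,\infty)$. The key ingredient you are missing is precisely this probabilistic lower bound for the $2$-variation of partial sum processes (a law-of-the-iterated-logarithm-type result), together with the chain of reductions (convolution approximation, dyadic means, conditional expectations) needed to bring the Ornstein--Uhlenbeck semigroup into its range. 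Without that, your geometric concentration argument has no mechanism to overcome the interference you correctly identify.
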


\smallskip

This theorem shows that  the condition $\varrho>2$ is necessary in Theorem \ref{locsmallt} and therefore also in Theorem \ref{thm}. The proof    of  Theorem \ref{thm_counterex}
  falls naturally into several parts and will occupy the entire section.

\medskip

We first describe the plan of the proof. 
It is based on a probabilistic result for the one-dimensional torus  due to Qian  \cite{Qian}.
To relate our setting to that of Qian, we first approximate $\mathcal{H}_t$ by a convolution operator $\mathcal{H}_t^c$. Staying in a compact set in $\R^n$, we consider only  $t \in (0,1]$ and verify that this approximation is admissible. The kernel $\mathcal{H}_t^c$ is then diagonalized, which gives a tensor product of one-dimensional convolution operators. 
Our focus will be on the first coordinate, and there  we apply the convolution operator to a  function defined as a sum of Rademacher functions. We also move from a compact interval to the one-dimensional torus, where we will have a convolution operator acting on a similar Rademacher sum. The parameter $t$ is now restricted to a dyadic sequence. 
Next, we  verify that we can replace the operator by a simpler mean value operator on the torus, which is then replaced by a dyadic conditional expectation operator, to which Qian's result applies. This gives the counterexample that proves Theorem \ref{thm_counterex}.

  \subsection{Approximation of the kernel}                
 We  consider only $t \in (0,1]$, and  $f$ will be supported in a  compact set. Moreover, $\mathcal{H}_t f$ will only be considered at points $x$ in 
  another compact set.
     For the $L^p$  norms of  $f$ and those of the variation of $\mathcal{H}_t f$, we can therefore use Lebesgue measure instead of $\gamma_\infty$. In the integral  \eqref{def-int-ker} defining  $\mathcal{H}_t f(x)$, we can  write $du$ instead of $d\gamma_\infty(u)$.
  This also allows us to delete the factors     ${(\det Q_\infty)}^{{1}/{2}}$      and $e^{R(x)}$ in the expression
  \eqref{mehler} for the Mehler kernel. What remains is the kernel
  \begin{align}    \label{mehlersimpl}                   
\widetilde K_t (x,u)
=
{(\det \, Q_t)}^{-{1}/{2} }\,
\exp \Big[
{-\frac12
\left\langle \left(
Q_t^{-1}-Q_\infty^{-1}\right) (u-D_t \,x) \,, u- D_t\, x\right\rangle}\Big]\!,
\end{align}
and we will approximate this kernel.
The  $\mathcal O(\cdot)$  symbol will be used for
scalars,   vectors and matrices and is for $t \to 0$. The implicit constants involved may depend on the compact sets mentioned, as well as on $Q$ and $B$.

 Since $t \le 1$, the definition \eqref{defQt} implies  $Q_t = Q t+ \mathcal O(t^2)$, and then
\begin{equation*}
  Q_t^{-1}-Q_\infty^{-1} = Q^{-1} t^{-1} \big(1+ \mathcal O(t)\big).
\end{equation*}
   Further,   $D_t\, x = x +  \mathcal O(t)$ because of \eqref{def:Dt}. Since  $x$ and $u$ stay bounded, it follows that
      \begin{align}\label{diff:section9}
&\left\langle \left(
Q_t^{-1}-Q_\infty^{-1}\right) (u-D_t \,x) \,, u-D_t\, x\right\rangle
\notag\\
&=
\left\langle
 Q^{-1} t^{-1} (u-x +  \mathcal O(t)),
u-x +  \mathcal O(t)
\right\rangle
+\mathcal O\big(|x-u|+t\big)
\notag\\
&=
\left\langle
 Q^{-1} t^{-1} (u-x),
u-x
\right\rangle+
 \mathcal O\big(|x-u|+t\big)
 .
\end{align}
We also observe that $\det \, Q_t =t^n\, \det  Q\, (1+ \mathcal O(t))$, so that
 \begin{equation}\label{det}
   {(\det \, Q_t)}^{-{1}/{2} } = {(\det \, Q)}^{-{1}/{2} }\,t^{-{n}/{2}} (1+ \mathcal O(t)).
 \end{equation}

This makes it natural to approximate $\widetilde K_t (x,u)$ by the simple convolution kernel 
$K^{\mathrm c}_t (x-u)$, where
\begin{equation}\label{ktc}
  K^{\mathrm c}_t (y) = {(\det \, Q)}^{-{1}/{2} }\,t^{-{n}/{2}} \,\exp \Big(
{-\frac12\, t^{-1}
\left| Q^{-1/2} y \right|^2}\Big),
\end{equation}
and we let 
  \begin{align*}                      
 \mathcal H^{\mathrm c}_tf(x) = K^{\mathrm c}_t * f(x).
\end{align*}
  

\subsection{The difference operator}

Let $\Delta_t$ be the operator defined by the kernel $\widetilde K_t(x,u)- K^{\mathrm c}_t (x-u)$.

\begin{proposition}\label{lemmaIIt}
 Let $C_1$ and  $C_2$ be compact  subsets of $\R^n$. If $f \in L^2(\R^n)$ has support contained in
$C_1$, then
\begin{equation*}
 \left\| \left\|\Delta_{2^{-2\ell}} f \right\|_{v(2),\N_+} \right\|_{L^2(C_2)} \lesssim \|f\|_{L^2(C_1)}.
 \end{equation*}
 Here the implicit constants may depend on $C_1$ and  $C_2$, in addition to $Q$ and $B$.
\end{proposition}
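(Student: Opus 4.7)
The plan is to reduce the variation seminorm to a square function via \eqref{discrete_trivial} and then to bound each $\Delta_t$ individually via Schur's test. First, \eqref{discrete_trivial} applied pointwise together with Fubini yields
\begin{equation*}
  \left\| \left\| \Delta_{2^{-2\ell}} f \right\|_{v(2), \N_+} \right\|_{L^2(C_2)}^2 \lesssim \sum_{\ell=1}^{\infty} \left\| \Delta_{2^{-2\ell}} f \right\|_{L^2(C_2)}^2.
\end{equation*}
Since $\sum_\ell 2^{-2\ell}<\infty$, it therefore suffices to prove that, uniformly in $t\in(0,1]$, one has $\| \Delta_t f \|_{L^2(C_2)} \lesssim \sqrt{t}\, \| f \|_{L^1(C_1) \cap L^2(C_1)}$, or more precisely $\|\Delta_t\|_{L^2(C_1)\to L^2(C_2)}\lesssim \sqrt t$.

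The key step is a sharp pointwise estimate for the kernel $\widetilde{K}_t(x,u)-K^{\mathrm c}_t(x-u)$ valid for $x\in C_2$, $u\in C_1$, $t\in(0,1]$. Substituting \eqref{diff:section9} and \eqref{det} into \eqref{mehlersimpl} and factoring out $K^{\mathrm c}_t(x-u)$ from \eqref{ktc}, I obtain
\begin{equation*}
  \widetilde{K}_t(x,u)=K^{\mathrm c}_t(x-u)\,\bigl(1+\mathcal O(t)\bigr)\,\exp\bigl(\mathcal O(|x-u|+t)\bigr).
\end{equation*}
Because $C_1,C_2$ are compact, the $\mathcal O$-symbol in the exponent stays uniformly bounded; the elementary bound $|e^E-1|\lesssim|E|$ for bounded $E$ therefore gives
\begin{equation*}
  \bigl|\widetilde{K}_t(x,u)-K^{\mathrm c}_t(x-u)\bigr| \;\lesssim\; K^{\mathrm c}_t(x-u)\,\bigl(t+|x-u|\bigr).
\end{equation*}

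Schur's test now closes the argument. A change of variables gives $\int K^{\mathrm c}_t(x-u)\,du\lesssim 1$ and, by the first absolute moment of a centered Gaussian of covariance $\simeq tQ$, $\int K^{\mathrm c}_t(x-u)\,|x-u|\,du\lesssim\sqrt{t}$, uniformly in $x$; the symmetric bounds hold for integration in $x$ with $u$ fixed. Consequently both Schur suprema of the kernel of $\Delta_t$ are $\mathcal O(\sqrt{t})$, which yields the required $L^2$ operator bound and hence the proposition. The only nontrivial point is the factorization of $\widetilde{K}_t$ against $K^{\mathrm c}_t$ with a controlled multiplicative error---and here the compactness of $C_1\cup C_2$ is what makes the error in the exponent bounded (though not small), so that linearizing $e^E-1$ is legitimate; everything else is routine.
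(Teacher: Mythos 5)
Your proof is correct and follows essentially the same route as the paper: both derive the pointwise bound $|\widetilde K_t(x,u)-K^{\mathrm c}_t(x-u)|\lesssim K^{\mathrm c}_t(x-u)(t+|x-u|)$ from \eqref{diff:section9} and \eqref{det}, then extract a $\sqrt t$ gain and sum over $t=2^{-2\ell}$ using \eqref{discrete_trivial}. The only cosmetic difference is that you run Schur's test with the first Gaussian moment, whereas the paper absorbs the factor $|x-u|$ into the exponential (turning it into $\sqrt t$) and invokes Young's inequality; both yield the same $L^2\to L^2$ bound.
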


\begin{proof}
By means  of \eqref{diff:section9} and \eqref{det}, we get for  $x \in C_2$ and  $u \in C_1$
\begin{align*}
&\widetilde K_t(x,u)\\
=&\,
(\det \, Q)^{-{1}/{2}}\,t^{-{n}/{2} }(1+\mathcal O(t))\,
\exp\Big( -\frac12  t^{-1}
\left\langle Q^{-1} (u-x), u-x \right\rangle
+ \mathcal O\big(|x-u|+t\big)
\Big)\\
= &\, K^{\mathrm c}_t (x-u)\,(1+\mathcal O(t))\,\exp\Big(\mathcal O\big(|x-u|+t\big)
\Big)\\
=&\, K^{\mathrm c}_t (x-u) + K^{\mathrm c}_t (x-u)\,\mathcal O\big(|x-u|+t\big).
\end{align*}
In the last term here, we may replace $|x-u|$ by $t^{1/2}$, if we reduce the factor $1/2$
in the exponential factor in the expression  \eqref{ktc} for $K^{\mathrm c}_t $. Thus
\begin{align*}
&|\widetilde K_t(x,u) -  K^{\mathrm c}_t (x-u)|  \lesssim
t^{1/2}\, t^{-n/2}\,\exp\Big(  -c  t^{-1}
|Q^{-1/2} (x-u)|^2
\Big).
\end{align*}
With $f $  supported  in $C_1$ and $x \in C_2$, we thus have

\begin{equation*}
  |\Delta_t f(x)| \lesssim t^{1/2}\, \int f(u)\, t^{-n/2}\,\exp\Big(  -c  t^{-1}
|
 Q^{-1/2} (x-u)
|^2\Big)\,du.
\end{equation*}
The integral here is the convolution of $f$ and an integrable kernel, and hence
\begin{equation*}
  \|\Delta_t f\|_{L^2(C_2)} \lesssim  t^{1/2}\, \|f\|_{L^2(C_1)}.
\end{equation*}
Letting  $t = 2^{-2\ell}$, squaring and summing, we obtain
\begin{equation*}
  \left\|\left(\sum_{\ell =1}^{\infty}(\Delta_{2^{-2\ell}} f)^2\right)^{1/2}\right\|_{L^2(C_2)} \lesssim  \|f\|_{L^2(C_1)}.
\end{equation*}
Because of \eqref{discrete_trivial}, this completes the proof.
\end{proof}

\subsection{A counterexample for  $ K^{\mathrm c}_t$}
For convenience we normalize  $ K^{\mathrm c}_t$ by multiplying by $(2\pi)^{-n/2}$.
Then we make an orthogonal change of variables in $x$ and in $u$ which diagonalizes the matrix $Q$. 
By also scaling the coordinates, we can then replace  $Q$ by the identity matrix. The new variables will be called $x'$ and $u'$, and we replace $f$ by $g$ defined by $f(u) = g(u')$, with equivalent $L^p$ norms. In the new coordinates, the relevant kernel is a tensor product
\begin{equation*}
  K_t^*(y') =  \prod_{i=1}^{n}  J_t(y'_i) 
\end{equation*}
where $J_t$ is the standard one-dimensional gaussian kernel
\begin{equation*}
   J_t(y'_i)  = \frac{1}{\sqrt{2\pi t}}\,\exp \Big(
{-\frac12\, t^{-1}
\left|  y_i'  \right|^2}\Big)
\end{equation*}
and the operator is given by $\mathcal H_t^*\, g = K_t^* * g$.

 We will choose $g$ as a tensor product
 $g(u') = \prod_{i=1}^{n} g_i(u'_i)$, so that
\begin{equation*}
  \mathcal  H_t^*\, g(x') =  \prod_{i=1}^{n} J_t  * g_j(x'_i),
\end{equation*}
with the convolutions taken in $\mathbb R$. In the sequel, we will consider the values of $\mathcal  H_t^* g$ only at points $x'$ in the cube $[0,1]^{n}$,  and each $g_i$ will have support in $[-1,2]$. 
This determines the compact sets $C_1$ and $C_2$, via the change of variables.
For
$i=2,\dots, n$, we simply choose $g_i = \chi_{[-1,2]}$, and observe that for these $i$ 
\begin{equation*}                       
  J_t  * g_i(x'_i) = 1+ \mathcal O(t), \qquad  x'_i \in [0,1].
\end{equation*}
It follows that 
\begin{equation}\label{i>1}
1 - \prod_{i=2}^{n} J_t  * g_i =\mathcal O(t) \qquad  \mathrm{in} \quad [0,1]^{n-1}.
 \end{equation}

The construction of $g_1$ requires more care. In the rest of this subsection,  we will   write simply
$x,\;u,\; y$ instead of $x'_1,\;u'_1,\;y'_1$ in one dimension. As already mentioned, we restrict  $t$ to the set $\{2^{-2\ell},\:\ell = 1,2, \dots \}$, and define operators
$$
A_\ell\, g_1 = J_{2^{-2\ell}} * g_1,\qquad \ell=1,2,\ldots.
$$ 
Notice that $J_{2^{-2\ell}}(y) = ({2\pi })^{-1/2}\,2^\ell\exp \left(
{-\frac12\, 
\left| 2^\ell y  \right|^2}\right)$.

We  will consider the variation in $\ell$    with $\ell$ ranging over the set
 $$
\mathcal I_N :=\{\ell \in \mathbb Z: 2N < \ell \le 3N \},
$$
 for large $N \in \N$.

To choose our function  $g_1$, we use the
 Rademacher functions,  supported in $[0,1]$ and given by
\begin{equation*}
  r_k = \sum_{j=1}^{2^{k-1}} \left( \chi_{((2j-2)2^{-k },\;(2j-1)2^{-k })} -  \chi_{((2j-1)2^{-k },\; 2j2^{-k })} \right), \qquad k=1,2,\dots.
\end{equation*}
We  identify  $[0,1)$ with the torus $\mathbb T = \R/\Z$, considered as a probability space with Lebesgue measure. Then the $r_k$ are independent Bernoulli random variables, taking the values $\pm 1$ with equal probability $1/2$.

In  $\R$ we define functions  coinciding  with the ${r}_k$ in $(0,1)$ and  supported in  $[-1,2]$, by setting for $k = 1,2,\dots$
 \begin{equation*}
   q_k(u) = r_k(u+1) + r_k(u) +r_k(u-1), \qquad  u \in \R.
 \end{equation*}

The function $g_1$ will  be
\begin{equation}\label{eq:def_gN}
g_N := \sum_{k \in \mathcal I_N} {q}_k.
\end{equation}
Notice that the notation $g_N$ is not in conflict with $g_i$, $i=1,\ldots,n$, since $N$ is large.
Then for  $1<p<\infty$ 
\begin{equation}\label{khinchine}
 \| g_N\|_{L^p(\R)} \simeq
   \| g_N\|_{L^p(\mathbb T)} \simeq \sqrt N,
\end{equation}
due to Khinchine's inequality.  We also observe the trivial estimate
 \begin{equation}\label{gNtriv}
   |g_N(x)| \le N, \qquad x \in \mathbb R.
 \end{equation}

The following is  the  main result of the  present subsection.
\begin{proposition}\label{controes2}
For some constant $c>0$, the Lebesgue measure of the set
  \begin{equation}
   \left\{x \in (0,1): \:\| A_\ell \,g_N(x) \|_{v(2),\mathcal I_N} > c \,\sqrt{N \,\mathrm{\log \log} N}\right\}
 \end{equation}
 tends to 1 as $N \to \infty$. Here the variation is taken in $\ell$.
\end{proposition}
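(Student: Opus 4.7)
The plan is to reduce Proposition \ref{controes2} to a probabilistic statement about a dyadic martingale on the torus $\mathbb{T} = \R/\Z$, to which Qian's law-of-iterated-logarithm type theorem \cite{Qian} applies.

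First, I would pass from $\R$ to $\mathbb{T}$. By construction, $g_N$ agrees on $(0,1)$ with the periodic extension of $\sum_{k \in \mathcal{I}_N} r_k$. For $x \in (0,1)$ and $\ell \in \mathcal{I}_N$, the Gaussian $J_{2^{-2\ell}}$ has standard deviation $2^{-\ell} \leq 2^{-2N}$, so the discrepancy between convolution of $g_N$ on $\R$ and convolution of $\sum_{k \in \mathcal{I}_N} r_k$ on $\mathbb{T}$ only involves the super-polynomially decaying Gaussian tails at distance $\geq 1$ from $x$. This error is uniformly bounded by $O(N^{-\infty})$ and contributes negligibly to the $v(2)$ seminorm over $\mathcal{I}_N$.

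Next, I would replace $A_\ell$, viewed on $\mathbb{T}$, by the dyadic conditional expectation $E_\ell$ onto the $\sigma$-algebra generated by the dyadic intervals of length $2^{-\ell}$. A direct computation gives $E_\ell r_k = r_k$ if $k \leq \ell$ and $E_\ell r_k = 0$ if $k > \ell$, while a Fourier computation shows that $A_\ell r_k$ equals $r_k$ times a multiplier close to $1$ when $k \leq \ell$, and of size $\exp(-c\,2^{2(k-\ell)})$ when $k > \ell$. Expanding $g_N$ on the Rademacher basis, using orthogonality in $L^2(\mathbb{T})$, and invoking the discrete bound \eqref{discrete_trivial}, one obtains an estimate of the form
\begin{equation*}
\bigl\| \, \|A_\ell g_N - E_\ell g_N\|_{v(2), \mathcal{I}_N} \, \bigr\|_{L^2(\mathbb{T})} = O\bigl(\sqrt{N}\bigr),
\end{equation*}
which by Chebyshev's inequality forces this approximation error to be $o(\sqrt{N \log \log N})$ outside a set of arbitrarily small measure.

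Finally, the sequence $\ell \mapsto E_\ell g_N = \sum_{k \in \mathcal{I}_N,\, k \leq \ell} r_k$, $\ell \in \mathcal{I}_N$, is a dyadic martingale on $\mathbb{T}$ whose increments are independent Rademacher variables. Qian's theorem \cite{Qian} provides a law-of-iterated-logarithm type lower bound on the $v(2)$ seminorm of such martingales, yielding that the set where $\|E_\ell g_N\|_{v(2), \mathcal{I}_N} \geq c\,\sqrt{N \log \log N}$ has Lebesgue measure tending to $1$ as $N \to \infty$. Pulling back to $(0,1) \subset \R$ and combining with the approximation estimate of the previous step, this proves Proposition \ref{controes2}. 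The principal difficulty is the second step: one needs bounds on the $v(2)$ seminorm of an operator difference, not merely on its pointwise action at each fixed $\ell$, and this requires careful use of the near-diagonalization of $A_\ell$ and $E_\ell$ on the Rademacher basis to sum the errors efficiently across $\ell \in \mathcal{I}_N$.
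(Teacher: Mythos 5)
Your proposal is correct in spirit but takes a genuinely different route from the paper. The paper introduces the mean-value operator $D_\ell$ as an intermediate: it passes from $A_\ell$ to $D_\ell^\R$ via the Jones--Wang $\sim_2$ equivalence (comparing Fourier transforms of the Gaussian and the box kernel), then to $D_\ell^{\mathbb T}$ exactly (the two agree on $(0,1)$ for the specific $g_N$ and $T_N$), and finally to $E_\ell$ using a square-function estimate of Jones--Rosenblatt. Both Jones--Wang and Jones--Rosenblatt are general $L^2(\mathbb T)$ statements, so the paper's intermediate steps do not exploit the Rademacher structure of $g_N$. You instead periodize early (using the super-exponential decay of the Gaussian of scale $2^{-\ell}\le 2^{-2N}$) and then compare $A_\ell$ to $E_\ell$ directly, exploiting that the $r_k$ have pairwise disjoint Fourier supports (frequencies of $2$-adic valuation exactly $k-1$) and that both $E_\ell$ and $A_\ell$ respect this decomposition. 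This yields $\|(A_\ell-E_\ell)g_N\|_2^2=\sum_k\|(A_\ell-E_\ell)r_k\|_2^2$, and a computation gives $\|(A_\ell-I)r_k\|_2^2\asymp 2^{k-\ell}$ for $k\le\ell$ and $\|A_\ell r_k\|_2^2\lesssim\exp(-c\,2^{2(k-\ell)})$ for $k>\ell$; summing over $k$ gives $O(1)$ for each $\ell$ and hence $O(N)$ over $\mathcal I_N$, which via \eqref{discrete_trivial} gives the claimed $O(\sqrt N)$. One caveat on your phrasing: the multiplier is \emph{not} ``close to $1$'' for $k$ near $\ell$ (e.g.\ at $k=\ell$ the dominant Fourier coefficient is multiplied by $e^{-\pi^2/2}\approx 0.007$, so $\|(A_\ell-I)r_\ell\|_2^2=O(1)$, not small); what saves the argument is that the errors are geometrically summable in $|k-\ell|$, so only $O(1)$ values of $k$ contribute non-negligibly per $\ell$. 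Your approach is more self-contained, avoiding the two black-box lemmas at the price of a Rademacher-specific Fourier computation; the paper's version is more modular and requires no computation beyond citing general results.
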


\begin{proof}
Starting with $A_\ell$, we will change operator in several steps, until we arrive at an operator to which Qian's result in \cite{Qian} applies.

\vskip5pt

  {\textbf{Step 1}} will take us  from $A_\ell$ to the
   mean value operator $D_\ell,\: \ell= 1, 2,\dots $,   given by
\begin{equation*}
  D_\ell f(x) = 2^{\ell-1}\,\int_{x-2^{-\ell}}^{x+2^{-\ell}} f(y)\,dy.
\end{equation*}
This definition works equally well in $\R$ and in $\mathbb T$, and we will write  $D_\ell^\R$  and $D_\ell^{\mathbb T}$  to distinguish the two.

Jones and Wang \cite[Section 2]{Jones4} have introduced an equivalence relation $ \sim_p$ between sequences of operators acting on functions defined on the torus.
 We define a similar relation for operators $P_\ell$ and $Q_\ell$, $\ell \in \mathbb N_+$, acting   on functions in $\R$, with $p=2$.
  By   $(P_\ell) \sim_2 (Q_\ell)$ we mean
     that for any sequence $\big(\nu_\ell\big)_1^\infty $, with $|\nu_\ell| \le 1$,   and any $f \in L^2(\R)$
\begin{equation*}
  \|\sum_{\ell\ge 1}(P_\ell f - Q_\ell f)\, \nu_\ell \|_2 \lesssim \|f\|_2.
\end{equation*}
Here  the norm is that in $L^2(\R)$.
As shown in \cite[Theorem 2.5]{Jones4}, this implies that
\begin{equation*}
  \left\|\left(\sum_{\ell\ge 1}|P_\ell f - Q_\ell f|^2\right)^{1/2} \right\|_2 \lesssim \|f\|_2,
\end{equation*}
 which by \eqref{discrete_trivial} yields
 \begin{equation} \label{paragon}
  \left\| \left\|P_\ell f - Q_\ell f \right\|_{v(2),\N_+} \right\|_2 \lesssim \|f\|_2.
\end{equation}
\begin{lemma}\label{AD}
We have
  $$
  (A_\ell) \sim_2 (D_\ell^\R).
  $$
\end{lemma}
\begin{proof}
  We adapt the proof of \cite[Lemma 2.8]{Jones4} to $\R$.
  Observe first that  $A_\ell$ and $D_\ell^\R$ are convolution operators with kernels
  $J_{2^{-2\ell}} = (2\pi)^{-1/2}\, 2^\ell\, \exp\left(-\frac12\,\left(2^\ell\,y\right)^2\right)$ and
  $\chi_\ell(y) = 2^{\ell-1}\,\chi\left(2^\ell\,y\right)$, respectively,  $\chi$ denoting the characteristic function of the interval $(-1,1)$.
  We use Fourier transforms, defined by
$\mathcal F\,f (\xi)=\hat f(\xi)=
  \int f(x) e^{-2\pi i x\,\xi} \,dx$,
  and the Plancherel theorem, to deduce
         \begin{align*}
  \|\sum_{\ell\ge 1} (A_\ell f - D_\ell^\R f)\, \nu_\ell \|_2^2                                    
 & =   \int_{\mathbb R}
 \Big|
 \sum_{\ell\ge 1}
 \mathcal F(A_\ell f - D_\ell^\R f)(\xi)\, \nu_\ell\Big|^2 d\xi   \\
 & \le \int_{\mathbb R}|\hat f(\xi)|^2
 \Big(
 \sum_{\ell\ge 1}\big|
{\widehat {J_{2^{-2\ell}}}(\xi)-\widehat {\chi_\ell}(\xi)\big|}\Big)^2 d\xi.
\end{align*}
 It is thus enough to prove that
\begin{equation}\label{haer}
\sum_{\ell\ge 1}\big|
{\widehat {J_{2^{-2\ell}}}(\xi)-\widehat {\chi_\ell}(\xi)\big|}\lesssim 1.
\end{equation}
The Fourier transform of the kernel $J_{2^{-2\ell}}$ is
$$
\widehat  {J_{2^{-2\ell}}}(\xi)= \exp\left(-2\pi^2({2^{-\ell}\xi})^2\right),
$$
so that  $|1-\widehat {J_{2^{-2\ell}}}(\xi)| \lesssim |{2^{-\ell}}\xi|^2$ for $|2^{-2\ell}\xi| \le 1$ and
$|\widehat {J_{2^{-\ell}}}(\xi)| \lesssim |2^{-\ell}\xi|^{-2}$ for $|2^{-\ell}\xi| \ge 1$.
The Fourier transform
$$
\widehat {\chi_\ell}(\xi) = \frac{\sin (2\pi2^{-\ell} \xi)}{2\pi 2^{-\ell}\xi}
$$
similarly satisfies   $|1-\widehat {\chi_{{\ell}}}(\xi)| \lesssim |{2^{-\ell}}\xi|^2$ for
$|2^{-\ell}\xi| \le 1$ but only $|\widehat {\chi_{{\ell}}}(\xi)| \lesssim |2^{-\ell}\xi|^{-1}$ for $|2^{-\ell}\xi| \ge 1$. From this  \eqref{haer} and the lemma follow.
\end{proof}

To   $A_{\ell} (g_N) (x)$ and $D_{\ell}^\R (g_N) (x)$ we  now apply the triangle inequality
for the variation seminorm and write for  $x \in \R$
 \begin{align} \label{ADR}
  \left\| A_{\ell}  (g_N) (x)  \right\|_{v(2),\mathcal I_N} \ge
    \left\| D_{\ell}^{\R} ( g_N)  (x) \right\|_{v(2),\mathcal I_N} - h_1 (x),
\end{align}
where
\begin{equation*}
  h_1  (x) =  \left\|A_{\ell}  ( g_N) (x) -  D_{\ell}^{\R} ( g_N) (x) \right\|_{v(2),\mathcal I_N}.
\end{equation*}
From Lemma  \ref{AD},  \eqref{paragon} and \eqref{khinchine}, we conclude that
$\|h_1\|_{L^2(\R)} \lesssim \sqrt N$.

\vskip17pt

  {\textbf{Step 2}}  consists in passing from $\R$ to $\mathbb T$. Observe that for any point $x \in (0,1)$, also considered as a point in   $\mathbb T$, one has
$D_{\ell}^{\R} (q_k)(x) = D_\ell^{\mathbb T} (r_k)(x)$.
Summing in $k$, we get for these $x$
$$
D_{\ell}^{\R} ( g_N) (x) = D_{\ell}^{\mathbb T} (T_N)(x),
$$
 where the function
 $$
 T_N = \sum_{k \in \mathcal I_N} r_k
 $$
  is defined in  $\mathbb T$. 
From \eqref{khinchine} we conclude
\begin{equation}\label{khinch}
  \| T_N   \|_{L^2(\mathbb T)} \simeq \sqrt N.
\end{equation}

\vskip17pt

For {\textbf{Step 3}}, we
let $E_{\ell}$ for $\ell= 1,2,\dots$ denote the conditional expectation operator
 which replaces an integrable function defined in  $\mathbb T$ by its mean value in each dyadic subinterval $((j-1)2^{-\ell },\; j2^{-\ell }) \subset [0,1]$. Then
\begin{equation}\label{Tn}
  E_{\ell} (T_N)=  \sum_{k \in \mathcal I_N, \: k \le \ell} r_k
\end{equation}
for $ \ell\in \mathcal I_N$, so that the $E_{\ell} (T_N)$
 form a finite martingale for the sequence of $\sigma$-algebras generated by the dyadic intervals of length $2^{-\ell}$, 
$\ell \in \mathcal I_N$, contained in $[0,1]$.
Step 3 aims at
 replacing $D_{\ell}^{\mathbb T}$ by  $E_{\ell}$.

Jones and Rosenblatt 
prove in \cite[Remark 4]{Jones_Rosenblatt} that for functions $f \in L^2(\mathbb T)$
  \begin{equation*}
  \left\|\left(\sum_{\ell=1}^\infty |D_{\ell}^{\mathbb T} f - E_{\ell} f|^2\right)^{1/2} \right\|_2 \lesssim \|f\|_2.
\end{equation*}
Here we refer to  the norm in $L^2(\mathbb T)$. These authors use a one-sided, right mean value operator (see \cite[p.528]{Jones_Rosenblatt}),
but since our $D_{\ell}^{\mathbb T}$ is the mean of the right and left operators, this is no problem.

Letting here $f = T_N$, we can apply \eqref{discrete_trivial} and then \eqref{khinch}, to get
 \begin{equation*}
   \left\| \left\|D_{\ell}^{\mathbb T} ( T_N) - E_{\ell} ( T_N)\right\|_{v(2),\mathcal I_N}\right\|_2
\le  \left\|\left(\sum_{\ell \in \mathcal I_N} |D_{\ell}^{\mathbb T} ( T_N) - E_{\ell} ( T_N)|^2\right)^{1/2} \right\|_2 \lesssim \|T_N\|_2  \simeq \sqrt N.
\end{equation*}

The triangle inequality for the seminorm $\|\cdot\|_{v(2),\mathcal I_N}$ now implies  that for  $x \in \mathbb T$
 \begin{equation} \label{DT}
  \left\| D_{\ell}^{\mathbb T}  ( T_N)(x) \right\|_{v(2),\mathcal I_N} \ge
 \left\| E_{\ell} ( T_N) (x) \right\|_{v(2),\mathcal I_N}
  - h_2(x),
\end{equation}
with
\begin{equation}\label{h2}
   \|h_2\|_{L^2(\mathbb T)} \lesssim \sqrt N.
              \end{equation}

\vskip17pt

In  {\textbf{Step 4}} we observe that the $r_k$ with $k \in \mathcal I_N$ are $N$ independent random variables fulfilling the hypotheses of
Qian \cite[Theorem 2.2]{Qian}. Because  of \eqref{Tn},  this result
 tells us that the probability, i.e., the Lebesgue measure, of the set
\begin{equation*}
 \left\{x \in \mathbb T: \:\|E_{\ell} (T_N)(x) \|_{v(2),\mathcal J_N} > c \,\sqrt{N \,\mathrm{\log \log} N}\right\}
\end{equation*}
tends to 1 as $N \to \infty$, for some $c>0$.

\vskip17pt

Finally, {\textbf{Step 5}} combines the result of Step 4 with the preceding steps, in reverse order.  Considering Step 3, we see that \eqref{h2}  and Chebyshev's inequality imply that
$|h_2(x)|$ is much smaller than $\sqrt{N\, \mathrm{log \log} N}$ except on a subset of $\mathbb T$ whose measure tends to 0  as $N \to \infty$.
Then \eqref{DT} implies that the measure of the set
\begin{equation*}
 \left\{x \in \mathbb T: \:\|D_{\ell}^{\mathbb T} (T_N)(x) \|_{v(2),\mathcal I_N} > c \,\sqrt{N\, \mathrm{\log \log} N}\right\}
\end{equation*}
  tends to 1 as $N \to \infty$, for some $c>0$.

Now  Step 2 says that this set is the same as
\begin{equation*}
 \left\{x \in [0,1): \:\|D_{\ell}^{\R} (g_N)(x) \|_{v(2),\mathcal I_N} > c \,\sqrt{N \,\mathrm{\log \log} N}\right\}.
\end{equation*}

As for Step 1, we can apply \eqref{ADR} with $x \in [0,1)$ and argue as we did for Step 3. Thus the set
\begin{equation} \label{A>}
 \left\{x \in [0,1): \:\|A_{\ell} (g_N)(x) \|_{v(2),\mathcal I_N} > c \,\sqrt{N \mathrm{\log \log} N}\right\}.
\end{equation}
also has  measure tending to 1, for some $c>0$.

The proof of Proposition \ref{controes2} is complete.
\end{proof}

 \subsection{End of proof of Theorem \ref{thm_counterex}}
 The function  $g=\otimes_{i=1}^n g_i$, where $g_1=g_N$, 
  satisfies 
  for $x' \in [0,1]^{n}$
 \begin{align*}
   \mathcal H_t^* \,g(x')            
 &  = J_t  * g_N(x'_1)\, \prod_{i=2}^{n} J_t  * g_i(x'_1)\\
&   = J_t  * g_N(x'_1) - J_t  * g_1(x'_1)\, \left(1-\prod_{i=2}^{n} J_t  * g_i(x'_i)\right)
 \end{align*}
 To estimate the second term here, we apply \eqref{gNtriv} to its first factor and \eqref{i>1} to the second factor. The second term is thus no larger than constant times $Nt$, which is dominated by
 $2^{-N}$ if $t = 2^{-2\ell}$ with $\ell\in \mathcal I_N$. From \eqref{discrete_trivial} we see that the
 seminorm
$\|\cdot\|_{v(2),\mathcal I_N}$ 
 of the second term is no larger than $N^{1/2}2^{-N}$, since the cardinality of $\mathcal I_N$ is $N$. 
Then we apply Proposition \ref{controes2} to 
 the first term, and conclude that the statement of this proposition also holds for $\mathcal H_t^* \,g(x')$.
 
The same is then true for $\mathcal H_t^c f(x)$,
where $f(u)=g(u')$ as before, and $x$ is now in some compact set.
Finally, Proposition \ref{lemmaIIt}
allows us to replace 
$\mathcal H_t^c $
by the operator defined by the kernel 
$\widetilde K_t (x,u)$,
and Theorem \ref{thm_counterex} follows.

\vskip15pt

\section{Appendix}\label{Appendix}

We consider    
 the integral in \eqref{formula2_Bochner}.

\begin{lemma}
If  $f \in L^1(du)$  and  $x$ is a point such that  $x\notin \mathrm{supp}\, f$, then
\begin{equation} \label{integral}                      
\int \widetilde M^{\rm{loc}}(x,u)\,f(u)\,du
\end{equation} 
 is a Bochner integral with respect to the Banach space $F_\varrho$.
\end{lemma}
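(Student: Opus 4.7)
The plan is to verify the two standard criteria for Bochner integrability with respect to $F_\varrho$: strong measurability of the integrand $u \mapsto \widetilde M^{\rm{loc}}(x,u)\,f(u)$, and integrability of its $F_\varrho$-norm. Since $x \notin \mathrm{supp}\,f$, the quantity $r_0 := \mathrm{dist}(x,\mathrm{supp}\,f)$ is strictly positive, so $f = 0$ almost everywhere on $B(x,r_0/2)$. This positive separation is the feature that makes everything work.

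For strong measurability, I would first show that the map $u \mapsto \widetilde M^{\rm{loc}}(x,u)$ is continuous from $\R^n \setminus \{x\}$ into $F_\varrho$. This is immediate from Proposition~\ref{lemma-Calderon-no}(b): given $u \ne x$, for every $u'$ with $|u-u'| < |x-u|/2$ one has
\[
\|\widetilde M^{\rm{loc}}(x,u) - \widetilde M^{\rm{loc}}(x,u')\|_{F_\varrho} \lesssim \frac{|u-u'|}{|x-u|^{n+1}},
\]
which tends to zero as $u' \to u$. Because $\R^n \setminus \{x\}$ is $\sigma$-compact, its image under this continuous map is contained in a separable closed subspace of $F_\varrho$: each compact subset of $\R^n \setminus \{x\}$ is mapped to a compact, hence separable, subset, and a countable union of separable sets is separable. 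Continuity also gives Borel measurability, so Pettis's measurability theorem yields strong measurability of $u \mapsto \widetilde M^{\rm{loc}}(x,u)$, extended by $0$ at $x$. Multiplying by the scalar measurable function $f$ preserves strong measurability of the integrand.

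For the norm integrability, Proposition~\ref{lemma-Calderon-no}(a) gives $\|\widetilde M^{\rm{loc}}(x,u)\|_{F_\varrho} \lesssim |x-u|^{-n}$, and on $\mathrm{supp}\,f$ we have $|x-u| \ge r_0$, whence
\[
\int_{\R^n} \|\widetilde M^{\rm{loc}}(x,u)\|_{F_\varrho}\,|f(u)|\,du \lesssim \frac{1}{r_0^n}\,\|f\|_{L^1(du)} < \infty.
\]
Together, strong measurability and norm integrability imply that the integral \eqref{integral} exists as a Bochner integral in $F_\varrho$.

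The only real subtlety, and the step I would treat most carefully, is the measurability, since $F_\varrho$ is not obviously separable: the index set $\Theta$ of finite increasing sequences in $(0,1]$ is uncountable. It is precisely the Hölder-type estimate in Proposition~\ref{lemma-Calderon-no}(b) that delivers essential separable-valuedness of the integrand via continuity, so that Pettis's theorem applies; without this step one could not reduce the Bochner integrability to the scalar integrability of the norm.
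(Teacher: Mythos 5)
Your proof is correct, but it takes a genuinely different route from the paper's. The paper proves Bochner integrability constructively: it covers $\mathrm{supp}\,f$ by cubes $Q_\nu$ of side $r$, defines the $F_\varrho$-valued simple functions
\[
S_r(u) = \sum_\nu \widetilde M^{\rm{loc}}(x,d_\nu)\, \frac 1 {|Q_\nu|}\int_{Q_\nu} f(y)\,dy\;\chi_{Q_\nu}(u),
\]
and verifies both pointwise a.e.\ convergence (strong measurability) and $L^1$ convergence of the $F_\varrho$-norms directly from the definition in Engel--Nagel, using the H\"older estimate behind Proposition~\ref{lemma-Calderon-no}(b) and the continuity of translations in $L^1$. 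You instead establish strong measurability abstractly: local Lipschitz continuity of $u\mapsto \widetilde M^{\rm{loc}}(x,u)$ on $\R^n\setminus\{x\}$ (which follows from Proposition~\ref{lemma-Calderon-no}(b)) gives Borel measurability and essential separable-valuedness, so Pettis's theorem applies, and then the scalar criterion $\int\|\cdot\|_{F_\varrho}|f|\,du<\infty$ finishes the argument via Proposition~\ref{lemma-Calderon-no}(a) and the positive distance $r_0$. Both proofs rest on the same two kernel estimates, so the mathematical content is equivalent; your version is shorter and cleanly decouples measurability from integrability, while the paper's avoids invoking the Pettis measurability theorem and makes the approximation explicit, which some readers may find more self-contained. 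One small simplification to your argument: since $\R^n\setminus\{x\}$ is already a separable metric space, continuity alone implies the range is separable, so the $\sigma$-compactness detour is not needed.
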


\begin{proof}
The function   $f$ and the point $x\notin \mathrm{supp}\, f$ will be fixed.
   Proposition \ref{lemma-Calderon-no}($a$)  shows that     
   the integrand        in \eqref{integral}                       
   is in $F_\varrho$ for each $u$.
   
   We will construct 
   a family  $S_r(u)$, with $0<r<r_0$ for some $r_0>0$, of $F_\varrho$-valued simple functions such that for a.a. $u$
 \begin{equation}	\label{measurability}
 \left\| \widetilde M^{\rm{loc}}(x,u)\,f(u) - S_r(u) \right\|_{F_\varrho} \to 0 \qquad  \mathrm{as}\quad r \to 0
\end{equation}
(the integrand is then called strongly measurable),  and also
\begin{equation}\label{integrability}
 \int \left\| \widetilde M^{\rm{loc}}(x,u)\,f(u) - S_r(u) \right\|_{F_\varrho}\,du \to 0 \qquad  \mathrm{as}\quad  r \to 0.
\end{equation}    
 Then \eqref{integral} fulfills the definition of an $F_\varrho$-Bochner integral, see \cite[Appendix~C]{Engel}.


To find these simple functions, we start by covering $\mathrm{supp} \,f$ with
 small cubes $Q_\nu\,,$ pairwise disjoint and all of side $r$. Here $r>0$ is so small that the distance from $x$   to the union of these cubes is larger than some $\delta_0 > 0$ independent of $r$.  The center of  $Q_\nu$ is denoted $d_\nu$.

In this proof, we allow all implicit constants to depend on $x,\;\delta_0$ and $f$.

The integrand $ \widetilde M^{\rm{loc}}(x,u)\,f(u)$ will be approximated by       
\begin{equation*}
S_r(u) = \sum_\nu \widetilde M^{\rm{loc}}(x,d_\nu)\, \frac 1 {|Q_\nu|}\int_{Q_\nu} f(y)\,dy\: \chi_{Q_\nu}(u).
\end{equation*}
We then have
\begin{align*}
\widetilde M^{\rm{loc}}(x,u)&\,f(u) - S_r(u)\\ &=
\sum_\nu  \frac 1 {|Q_\nu|}\int_{Q_\nu}
\left( \widetilde M^{\rm{loc}}(x,u) \,f(u)- \widetilde M^{\rm{loc}}(x,d_\nu)\, f(y) \right)\,dy\: \chi_{Q_\nu}(u)\\
&=
 \sum_\nu \widetilde M^{\rm{loc}}(x,d_\nu)\, \frac 1 {|Q_\nu|}\int_{Q_\nu} (f(u)-
 f(y))\,dy
 \,\chi_{Q_\nu}(u) \\ &+
 \sum_\nu \left(\widetilde M^{\rm{loc}}(x,u)
 -\widetilde M^{\rm{loc}}(x,d_\nu)\right)\,  \frac 1 {|Q_\nu|}\int_{Q_\nu} \,dy\,f(u)\: \chi_{Q_\nu}(u) 
 =: \mathcal I +  \mathcal  J .
  \end{align*}
  
  Starting with $ \mathcal J$, we claim that
                       $\|\widetilde M^{\rm{loc}}(x,u) -\widetilde M^{\rm{loc}}(x,d_\nu) \|_{F_\varrho} = \mathcal O(r)$ as $r \to 0$. 
 As in the proof of   Proposition \ref{lemma-Calderon-no}($a$)  we have
 \begin{align*}
   \|\widetilde M^{\rm{loc}}(x,u) -\widetilde M^{\rm{loc}}(x,d_\nu) \|_{F_\varrho}&=
    e^{-R(x)}\, \|\eta(x,u)\,K_t(x,u) - \eta(x,d_\nu)\,K_t(x,d_\nu)\|_{v(\varrho),(0,1]} \\& \le 
    e^{-R(x)}\, |\eta(x,u) -\eta(x,d_\nu)|\,\|K_t(x,u)\|_{v(\varrho),(0,1]} \,\\  &+ \,
     e^{-R(x)}\, \eta(x,d_\nu)\, \|K_t(x,u) - K_t(x,d_\nu)\|_{v(\varrho),(0,1]}\,. 
 \end{align*}
 In the sum here, the first term is no larger than constant times $e^{-R(x)}\, r\,\int_{0}^{1}| \dot K_t(x,u)|\,dt \lesssim r\,|x-u|^{-n}       = \mathcal O(r)$,  again because of  
 Proposition \ref{lemma-Calderon-no}($a$) and its proof. The second term does not exceed
 \begin{equation*}
  e^{-R(x)}\, \int_{0}^{1}| \partial_t(K_t(x,u) - K_t(x,d_\nu))|\,dt\,. 
 \end{equation*}
 If  $w$ denotes the  vector $u - d_\nu$ and   $\nabla_2$
 the gradient in the second variable, we can write
 \begin{align*}
    \partial_t(K_t(x,u) - K_t(x,d_\nu)) &
    = \partial_t \, \int_{0}^{1} \left\langle    \nabla_2
    \,K_t(x,d_\nu + sw), w \right\rangle \,ds\\
   & = \int_{0}^{1} \left\langle
   \nabla_2
    \,\dot K_t(x,d_\nu + sw), w \right\rangle \,ds.
 \end{align*}
 The estimate \eqref{dtdu} shows that
 \begin{align*}
  e^{-R(x)}\,    \int_{0}^{1} |
     \nabla_2
     \,\dot K_t(x,d_\nu + sw)|\,ds \lesssim 1.
 \end{align*}
  Since  $|w| \lesssim r$, the claim follows, and we see that  
  \begin{equation*}
\left\|        \mathcal J    \right\|_{F_\varrho}    \lesssim C r \,|f(u)|  \to 0  \qquad \mathrm{a.e.}  
  \end{equation*}


 As for the term $ \mathcal I $,  Proposition \ref{lemma-Calderon-no}($a$) implies that
\begin{align*}
 \| \mathcal I \|_{F_\varrho} \lesssim \sum_\nu  \frac 1 {|Q_\nu|}\int_{Q_\nu} \,|f(u)- f(y)|\,dy   \: \chi_{Q_\nu}(u),
 \end{align*}
 whence 
$  \| \mathcal I  \|_{F_\varrho}\to 0$ as $r\to 0$  at any Lebesgue point.

Thus the measurability condition \eqref{measurability} is satisfied.

 It remains to verify \eqref{integrability}.
 Reasoning as above, one has  since  $|w| \lesssim r$
 \begin{equation*}
  \int \left\|        \mathcal J     \right\|_{F_\varrho} \,du   = \mathcal O\left(\| f\|_{L^1}\,r \right).
  \end{equation*}
   For the term $ \mathcal I $, we write
  \begin{align*}
   \int   \left\|      \mathcal I       \right\|_{F_\varrho}\,du  & \lesssim
   \sum_\nu \int_{Q_\nu} \frac 1 {|Q_\nu|}\int_{Q_\nu} \,|f(u)- f(y)|\,dy \,du \\
& \le   \sum_\nu \int_{Q_\nu}  \frac 1 {|Q_\nu|}\int_{|v|< r\sqrt n} \,|f(u)- f(u+v)|\,dv \,du  \\
& =   r^{-n}  \,\int_{|v|< r\sqrt n} \,dv \,  \sum_\nu  \, \int_{Q_\nu} |f(u)- f(u+v)|\,du,
\end{align*}
since $|Q_\nu| = r^n$. Denoting by $\tau_v$ the translation operator, we see that the last expression in this formula equals
\begin{align*}
   r^{-n}  \,  \int_{|v|< r\sqrt n} \,dw \, \|f- \tau_v \,f \|_{L^1} \lesssim \sup_{|v|< r\sqrt n} \|f- \tau_v \,f \|_{L^1}.
\end{align*}
Since translation is continuous in the $L^1$ norm, this tends to 0 with $r$.

Altogether, we have proved \eqref{integrability},
  and  so the integral \eqref{integral} is ${F_\varrho}$-Bochner.
\end{proof}

\subsection*{Conflict of interest }
The authors
declare that they do not have any conflict of interest for this work.

\end{document}